\newtheorem{theorem}{Theorem}[section]
\newtheorem{lemma}[theorem]{Lemma}
\newtheorem{corollary}[theorem]{Corollary}
\theoremstyle{definition}
\newtheorem{example}[theorem]{Example}
\newtheorem{remark}[theorem]{Remark}
\numberwithin{equation}{section}
\newcommand{\R}{{\mathbb R}}
\begin{document}

\setcounter{page}{1}

\title[Fefferman's Ineq. and Appl. in Elliptic PDEs]{Fefferman's Inequality and Applications in Elliptic Partial Differential Equations}

\author[Nicky K. Tumalun, Denny I. Hakim, \MakeLowercase{and} Hendra Gunawan]{Nicky K. Tumalun$^{1,*}$, Denny I. Hakim$^{2}$, \MakeLowercase{and} Hendra Gunawan$^3$}

\address{$^{1}$Analysis and Geometry Group, Bandung Institute of Technology, Jl. Ganesha No. 10, Bandung 40132, Indonesia}
\email{\textcolor[rgb]{0.00,0.00,0.84}{nickytumalun@yahoo.co.id}}

\address{$^{2}$Analysis and Geometry Group, Bandung Institute of Technology, Jl. Ganesha No. 10, Bandung 40132, Indonesia}
\email{\textcolor[rgb]{0.00,0.00,0.84}{dhakim@math.itb.ac.id}}

\address{$^{3}$Analysis and Geometry Group, Bandung Institute of Technology, Jl. Ganesha No. 10, Bandung 40132, Indonesia}
\email{\textcolor[rgb]{0.00,0.00,0.84}{hgunawan@math.itb.ac.id}}




\subjclass[2010]{26D10 \quad 46E30 \quad 35J15}

\keywords{Morrey spaces; Stummel classes; Fefferman's inequality; Strong unique continuation property.
\newline \indent $^{*}$Corresponding author}


\begin{abstract}
In this paper we prove  Fefferman's inequalities associated to potentials belonging
to a generalized Morrey space $  L^{p,\varphi} $ or a Stummel class $ \tilde{S}_{\alpha,p} $.
Our results generalize and extend  Fefferman's inequalities obtained in \cite{CRR,CF,F,Z1}.
We also show that the logarithmic of non-negative weak solution of second order elliptic partial differential equation,
where its potentials are assumed in generalized Morrey spaces and Stummel classes, belongs to the bounded mean oscillation class.
As a consequence, this elliptic partial differential equation has the strong unique continuation property.
An example of an elliptic partial differential equation where its potential belongs to certain Morrey spaces
or Stummel classes which does not satisfy the strong unique continuation is presented.
\end{abstract} \maketitle

\section{Introduction and Statement of Main Results}\label{intro}

Let $ 1 \leq p < \infty $ and $ \varphi : (0,\infty) \rightarrow (0,\infty)$.
The \textbf{generalized Morrey space} $L^{p,\varphi} := L^{p,\varphi}(\mathbb{R}^{n}) $,
which was introduced by Nakai in \cite{N}, is the
collection of all functions $ f \in L^{p}_{\rm loc}(\mathbb{R}^{n}) $ satisfying
\begin{equation*}
\| f \|_{L^{p,\varphi}} := \sup_{x \in \mathbb{R}^{n}, r>0} \left( \frac{1}{\varphi(r)}
\int\limits_{|x-y|<r} |f(y)|^{p} dy \right)^{\frac{1}{p}} < \infty.
\end{equation*}
Note that $ L^{p,\varphi} $ is a Banach space with norm $ \| \cdot \|_{L^{p,\varphi}}$.
If $ \varphi (r) = 1$, then $ L^{p,\varphi} = L^{p}$. If $ \varphi (r) = r^{n}$, then
$ L^{p,\varphi} = L^{\infty}$. If $ \varphi (r) = r^{\lambda} $ where $ 0 < \lambda < n$,
then $ L^{p,\varphi} = L^{p,\lambda} $ is the classical Morrey space introduced in \cite{M}.

We will assume the following conditions for $ \varphi $ which will be stated whenever necessary.

\begin{itemize}
	\item [(1)]
	There exists $ C>0 $ such that
	\begin{equation}\label{a1}
	s \leq t \Rightarrow \varphi(s) \leq C \varphi(t).
	\end{equation}
	We say $ \varphi $ \textbf{almost increasing} if $\varphi$ satisfies this condition.
	
	\item [(2)]
	There exists $ C>0 $ such that
	\begin{equation}\label{a2}
	s \leq t \Rightarrow \frac{\varphi(s)}{s^{n}} \geq C \frac{\varphi(t)}{t^{n}}.
	\end{equation}
	We say $ \varphi(t)t^{-n} $ \textbf{almost decreasing} if $ \varphi(t)t^{-n} $ satisfies this condition.
	
	\item[(3)]
	For $ 1 < \alpha < n $, $ 1 < p < \frac{n}{\alpha}$, there exists a constant $ C>0 $ such that for every  $ \delta > 0$,
	\begin{equation}\label{a3}
	\int\limits_{\delta}^{\infty} \frac{\varphi(t)}{t^{(n+1) - \frac{p}{2}(\alpha+1) }} \, dt \leq C \delta^{\frac{p}{2}(1-\alpha)}.
	\end{equation}
	This last condition for $ \varphi $ will be called \textbf{Nakai's condition}.
\end{itemize}
One can check that the function $ \varphi(t) = t^{n-\alpha p} $, $ t>0 $, satisfies all conditions
\eqref{a1}, \eqref{a2}, and \eqref{a3}. Moreover, for a non-trivial example, we
have the function $ \varphi_{0}(t) = \log(\varphi(t)+1) = \log(t^{n-\alpha p}+1) $, $ t>0 $, which satisfies all conditions above.

Let $ M $ be the \textbf{Hardy-Littlewood maximal operator}, defined by
\begin{equation*}
M(f)(x) := \sup_{r>0} \frac{1}{|B(x,r)|} \int\limits_{B(x,r)} |f(y)| \, dy
\end{equation*}
for every $ f \in L^{1}_{\rm loc}(\mathbb{R}^{n}) $. The function $ M(f) $ is called the
\textbf{Hardy-Littlewood maximal function}. Notice that, for every $ f \in L^{p}_{\rm loc}(\mathbb{R}^{n}) $
where $ 1 \leq p \leq \infty$, $ M(f)(x) $ is finite for almost all $ x \in \mathbb{R}^{n}$.
Using Lebesgue Differentiation Theorem, we have
\begin{equation}\label{f-Mf}
|f(x)| = \lim\limits_{r \rightarrow 0} \frac{1}{|B(x,r)|} \int_{B(x,r)} |f(y)| \, dy
\leq \lim\limits_{r \rightarrow 0} M(f)(x) = M(f)(x),
\end{equation}
for every $ f \in L^{1}_{\rm loc}(\mathbb{R}^{n}) $ and for almost all $ x \in \mathbb{R}^{n}$.
Furthermore, for every $ f \in L^{p}_{\rm loc}(\mathbb{R}^{n}) $ where $ 1 \leq p \leq \infty $
and $ 0 < \gamma < 1$, the nonnegative function
$ w(x) := \left[ M(f)(x) \right]^{\gamma} $ is an $ A_{1} $ weight, that is,
\begin{equation*}
M(w)(x) \leq C(n,\gamma) w(x).
\end{equation*}
These maximal operator properties can be found in \cite{GR,S}.

We will need the following property about the boundedness of
the Hardy-Littlewood maximal operator on generalized Morrey spaces $ L^{p,\varphi}$ which is stated in \cite{N,N2,YS}, that is,
\begin{equation*}
\| M(f) \|_{L^{p,\varphi}} \leq C(n,p) \| f \|_{L^{p,\varphi}},
\end{equation*}
for every $f \in L^{p,\varphi}$, where $ 1 \leq p < \infty$ and $ \varphi $ satisfies conditions \eqref{a1} and \eqref{a2}.
 Note that in \cite{N}, the proof of this boundedness property relies on a condition about the integrability
of $ \varphi(t)t^{-(n+1)} $ over the interval $ (\delta,\infty) $ for every positive number $ \delta$.

Let $ 1 \leq p < \infty $ and $0<\alpha<n$. For $ V \in L^{p}_{\rm loc}(\mathbb{R}^{n})$,
we write
\begin{equation*}
\eta_{\alpha,p}V(r) := \sup_{x \in \mathbb{R}^{n}} \left(\int\limits_{|x-y|<r}
\frac{|V(y)|^{p}}{|x-y|^{n-\alpha}} \, dy \right)^{\frac{1}{p}},\quad r>0.
\end{equation*}
We call $\eta_{\alpha,p}V$ the \textbf{Stummel $p$-modu\-lus} of $V$. If $\eta_{\alpha,p}V(r)$
is finite for every $ r>0 $, then $\eta_{\alpha,p}V(r)$ is nondecreasing on the set of
positive real numbers and satisfies
\begin{equation*}
\eta_{\alpha,p}V(2r) \leq C(n,\alpha) \, \eta_{\alpha,p}V(r),\quad r>0.
\end{equation*}
The last inequality is known as the \textbf{doubling condition} for the Stummel $p-$modulus of $ V $ \cite[p.550]{THG}.

For each $0<\alpha<n$ and $1\le p<\infty$, let
\begin{equation*}
\tilde{S}_{\alpha,p}:= \left\lbrace V \in L^{p}_{\rm loc}(\mathbb{R}^{n}) :
\eta_{\alpha,p}V(r) < \infty \, \text{for all} \, r>0 \right\rbrace
\end{equation*}
and
\begin{equation*}
S_{\alpha,p}:= \left\lbrace V \in L^{p}_{\rm loc}(\mathbb{R}^{n}) :
\eta_{\alpha,p}V(r) < \infty \,\, \text{for all} \, r>0 \, \text{and} \,
\lim\limits_{r\rightarrow 0}\eta_{\alpha,p}V(r) = 0 \right\rbrace.
\end{equation*}
The set $S_{\alpha,p}$ is called a \textbf{Stummel class}, while
$\tilde{S}_{\alpha,p}$ is called a \textbf{bounded Stummel modulus class}.
For $p=1$, $S_{\alpha,1} := S_{\alpha}$ are the Stummel classes which
were introduced in \cite{DH,RZ}. We also write $\tilde{S}_{\alpha,1} :=
\tilde{S}_{\alpha}$ and $\eta_{\alpha,1}:=\eta_{\alpha}$. It was shown in
\cite{THG} that $\tilde{S}_{\alpha,p}$ contains $S_{\alpha,p}$ properly.
These classes play an important role in studying the regularity theory of
partial differential equations (see \cite{CRR,CFG,DH,S,Z1} for example), and have an inclusion
relation with Morrey spaces under some certain conditions \cite{CRR,RZ,TG,THG}.

Now we state our results for  Fefferman's inequalities:

\begin{theorem}\label{th 2.1}
Let $ 1 < \alpha < n$, $ 1 < p < \frac{n}{\alpha}$, and $ \varphi $ satisfy conditions \eqref{a1}, \eqref{a2}, \eqref{a3}.
If  $ V \in L^{p,\varphi}$, then
	\begin{equation}\label{feff-in Morrey}
	\int\limits_{\mathbb{R}^{n}} |u(x)|^{\alpha} |V(x)| dx
	\leq C \| V \|_{L^{p,\varphi}} \int\limits_{\mathbb{R}^{n}} |\nabla u(x)|^{\alpha} dx
	\end{equation}
for every $ u \in C^{\infty}_{0}(\mathbb{R}^{n})$.
\end{theorem}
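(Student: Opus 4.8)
The plan is to reduce \eqref{feff-in Morrey} to a trace inequality for the Riesz potential and then to exploit the Morrey growth of the measure $|V(x)|\,dx$. First I would use the standard pointwise representation of a compactly supported smooth function by its gradient: for $u\in C^{\infty}_{0}(\mathbb{R}^{n})$,
\begin{equation*}
|u(x)| \le c_{n}\int_{\mathbb{R}^{n}}\frac{|\nabla u(y)|}{|x-y|^{n-1}}\,dy = c_{n}\, I_{1}(|\nabla u|)(x),
\end{equation*}
where $I_{1}h(x):=\int_{\mathbb{R}^{n}}|x-y|^{-(n-1)}h(y)\,dy$ is the Riesz potential of order $1$. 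Raising to the power $\alpha$ and integrating against $|V|$, the theorem follows once I establish the trace inequality
\begin{equation*}
\int_{\mathbb{R}^{n}}\big(I_{1}h(x)\big)^{\alpha}|V(x)|\,dx \le C\,\|V\|_{L^{p,\varphi}}\int_{\mathbb{R}^{n}} h(x)^{\alpha}\,dx
\end{equation*}
for all $0\le h\in L^{\alpha}(\mathbb{R}^{n})$, applied to $h=|\nabla u|$.

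Next I would record why the Morrey hypothesis is the right one. By H\"older's inequality and the definition of $\|\cdot\|_{L^{p,\varphi}}$,
\begin{equation*}
\int_{|x-y|<r}|V(y)|\,dy \le \|V\|_{L^{p,\varphi}}\,\varphi(r)^{1/p}\,(c_{n} r^{n})^{1-1/p},
\end{equation*}
so $|V|\,dx$ behaves like a measure of dimension $n-\alpha$: in the model case $\varphi(r)=r^{\,n-\alpha p}$ the right-hand side is $C\|V\|_{L^{p,\varphi}}\,r^{\,n-\alpha}$, and the target estimate is then exactly the borderline case $s=q=\alpha$ of a Fefferman--Phong/Adams-type trace inequality for $I_{1}$ (source exponent $\alpha$, measure dimension $\lambda=n-\alpha$, since $\tfrac{\lambda\alpha}{n-\alpha}=\alpha$ and $\alpha<n$). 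The role of Nakai's condition \eqref{a3} is to make this mechanism survive for a general $\varphi$ that is not a pure power. Writing $\beta=\tfrac p2(\alpha+1)$, which satisfies $0<\beta<n$ under the hypotheses $1<\alpha<n$ and $1<p<\tfrac n\alpha$, a dyadic estimate over the exterior annuli gives
\begin{equation*}
\sup_{x\in\mathbb{R}^{n}}\int_{|x-y|>\delta}\frac{|V(y)|^{p}}{|x-y|^{\,n-\beta}}\,dy \le C\,\|V\|_{L^{p,\varphi}}^{p}\int_{\delta}^{\infty}\frac{\varphi(t)}{t^{\,(n+1)-\frac p2(\alpha+1)}}\,dt \le C\,\|V\|_{L^{p,\varphi}}^{p}\,\delta^{\frac p2(1-\alpha)},
\end{equation*}
where the last inequality is precisely Nakai's condition \eqref{a3} (the denominator exponent matches $n-\beta+1$). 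This quantitative decay of the tail Stummel $p$-modulus in $\delta$ is what replaces the clean power law and controls the far interaction for general $\varphi$.

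Finally, to prove the trace inequality itself I would run the argument adapted to the generalized Morrey setting. For fixed $x$ I split $I_{1}h(x)$ at a radius $\delta$ into a near part, bounded dyadically by $C\,\delta\, Mh(x)$, and a far part, controlled after a H\"older splitting of the kernel by the exterior modulus displayed above; integrating the $\alpha$-th power against $|V|$ and using Fubini then converts the far interaction into the tail bound governed by \eqref{a3}, while the near part is absorbed using the two tools highlighted in the introduction, namely the boundedness of $M$ on $L^{p,\varphi}$ and the $A_{1}$ property of $[M(\cdot)]^{\gamma}$. The main obstacle is exactly this borderline weighted summation: because the exponents on the two sides coincide ($s=q=\alpha$), one cannot afford a global H\"older inequality in $V$ (Morrey functions need not lie in any global $L^{q}$), so the annular contributions must be assembled against the weight $|V|$ without losing scale invariance. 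It is here that the almost-monotonicity conditions \eqref{a1} and \eqref{a2}, together with the tail bound \eqref{a3}, are used to force the resulting geometric series to converge with total constant $C\|V\|_{L^{p,\varphi}}$. The representation and H\"older steps are routine; the delicate point is choosing the kernel splitting (carrying both the power $\alpha$ and the Morrey exponent $p$) and organizing the summation so that the general function $\varphi$ plays the role that the pure power $r^{\,n-\alpha}$ plays in the classical estimate.
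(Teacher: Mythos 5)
You have assembled exactly the right ingredients---the representation $|u|\le c_n I_1(|\nabla u|)$, the dyadic tail estimate driven by Nakai's condition \eqref{a3} (your displayed bound for $\sup_x\int_{|x-y|>\delta}|V(y)|^p|x-y|^{\beta-n}\,dy$ is precisely the computation the paper performs), the boundedness of $M$ on $L^{p,\varphi}$, and the $A_1$ majorant $[M(|V|^{\gamma})]^{1/\gamma}$---and your diagnosis of why the borderline case $s=q=\alpha$ is delicate is correct. But the final paragraph, where the trace inequality is actually supposed to be proved, does not assemble into an argument. The Hedberg splitting at radius $\delta$ must be applied to the potential, i.e.\ to $\int |V(y)|\,|x-y|^{1-n}\,dy$, not to $I_1h(x)$: splitting $I_1h$ produces a near part $\delta\,Mh(x)$ whose contribution $\delta^{\alpha}\int (Mh)^{\alpha}|V|\,dx$ cannot be ``absorbed'' by the boundedness of $M$ on $L^{p,\varphi}$ (that is a statement about $V$, not about $h$; boundedness of $M$ on $L^{\alpha}(|V|\,dx)$ would require an $A_{\alpha}$ hypothesis you do not have), and a far part that is an integral of $h$ over $\{|x-y|\ge\delta\}$, which cannot be controlled by the exterior Stummel-type modulus of $V$. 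As written, the two halves of your splitting are measured against the wrong function.

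What closes the argument (and is the paper's route) is the following. Write $|u(x)|^{\alpha}\le C\int |u(y)|^{\alpha-1}|\nabla u(y)|\,|x-y|^{1-n}\,dy$, use Tonelli to move the kernel onto the potential, and prove the pointwise lemma $\int |V(y)|\,|x-y|^{1-n}\,dy \le C\|V\|_{L^{p,\varphi}}^{1/\alpha}\,[M(V)(x)]^{(\alpha-1)/\alpha}$ by combining the near bound $C\delta\,M(V)(x)$ with your tail bound $C\|V\|_{L^{p,\varphi}}\delta^{1-\alpha}$ and optimizing $\delta=\|V\|_{L^{p,\varphi}}^{1/\alpha}[M(V)(x)]^{-1/\alpha}$; this optimization, which produces the exponent $\tfrac{\alpha-1}{\alpha}$ on $M(V)$, is absent from your sketch. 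H\"older's inequality with exponents $\alpha$ and $\tfrac{\alpha}{\alpha-1}$ then yields a factor $\bigl(\int |u|^{\alpha}M(V)\bigr)^{(\alpha-1)/\alpha}$, which is absorbed into the left-hand side after first replacing $|V|$ by the $A_1$ weight $w=[M(|V|^{\gamma})]^{1/\gamma}\ge |V|$ with $1<\gamma<p$ and $\|w\|_{L^{p,\varphi}}\le C\|V\|_{L^{p,\varphi}}$. Note also that reducing at the outset to a trace inequality for arbitrary $h\ge 0$ discards the structure $h=|u|^{\alpha-1}|\nabla u|$ with $I_1h\gtrsim |u|^{\alpha}$ that makes this absorption possible, so that reduction, while formally sufficient, would itself require a genuinely different (and harder) proof than the one you outline.
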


\begin{theorem}\label{th 2.2}
Let $1\le p<\infty$, $1\le \alpha\le 2$, and $ \alpha < n$. If $V\in \tilde{S}_{\alpha,p}(\mathbb{R}^n)$, then
there exists a constant $C:=C(n,\alpha)>0$ such that
	\begin{align*}
	\int\limits_{B(x_0, r_0)}
	|V(x)|^p |u(x)|^\alpha  \ dx
	\le
	C
	[\eta_{\alpha,p}V(r_0)]^p \int\limits_{B(x_0, r_0)} |\nabla u(x)|^\alpha \ dx,
	\end{align*}
for every ball $B_0:=B(x_0, r_0) \subseteq \mathbb{R}^n$ and $u\in C^\infty_{0}(\mathbb{R}^n)$ with ${\rm supp}(u)\subseteq B_0$.
\end{theorem}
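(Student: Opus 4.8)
The plan is to reduce the estimate to a pointwise representation of $u$ through its gradient, and then to turn the resulting double integral into the Stummel $\alpha$-modulus by Fubini's theorem, Hölder's inequality, and the doubling condition. Since $1\le\alpha\le 2$, the function $w:=|u|^\alpha$ is of class $C^1$ with compact support in $B_0$ and $|\nabla w|=\alpha|u|^{\alpha-1}|\nabla u|$; applying to $w$ the elementary representation $|w(x)|\le c_n\int_{B_0}|\nabla w(y)|\,|x-y|^{1-n}\,dy$ (obtained by integrating $w$ along rays from $x$ and averaging over the unit sphere) yields the key pointwise bound
\begin{equation*}
|u(x)|^\alpha\le C\int_{B_0}\frac{|u(y)|^{\alpha-1}\,|\nabla u(y)|}{|x-y|^{n-1}}\,dy,\qquad x\in B_0 .
\end{equation*}

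Next I would multiply by $|V(x)|^p$, integrate over $B_0$, and apply Fubini's theorem to obtain
\begin{equation*}
\int_{B_0}|V(x)|^p|u(x)|^\alpha\,dx\le C\int_{B_0}|u(y)|^{\alpha-1}|\nabla u(y)|\,\Phi(y)\,dy,\qquad \Phi(y):=\int_{B_0}\frac{|V(x)|^p}{|x-y|^{n-1}}\,dx .
\end{equation*}
The aim is to make the modulus $\eta_{\alpha,p}V$ appear, which carries the kernel $|x-y|^{-(n-\alpha)}$ in place of $|x-y|^{-(n-1)}$. The mechanism I would use is Hölder's inequality in $y$ with exponents $\alpha$ and $\alpha'=\alpha/(\alpha-1)$, splitting the kernel so that, after integrating in $x$, the $V$-factor reconstitutes $\int_{B_0}|V(x)|^p|x-y|^{-(n-\alpha)}\,dx\le[\eta_{\alpha,p}V(2r_0)]^p$; the factor $|u|^{\alpha-1}$ raised to $\alpha'$ becomes $|u|^\alpha$, which I would control by the Poincaré inequality $\int_{B_0}|u|^\alpha\le C r_0^{\alpha}\int_{B_0}|\nabla u|^\alpha$, and finally I would pass from $2r_0$ to $r_0$ using the doubling inequality $\eta_{\alpha,p}V(2r_0)\le C(n,\alpha)\,\eta_{\alpha,p}V(r_0)$ recorded above.

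The hard part will be reconciling the kernel $|x-y|^{-(n-1)}$ produced by the representation with the less singular kernel $|x-y|^{-(n-\alpha)}$ built into $\eta_{\alpha,p}V$: any single application of Hölder that places the full exponent $n-\alpha$ on the $V$-average forces the complementary integral to be $\int_{B_0}|x-y|^{-n}\,dy$, which diverges logarithmically, so the naive one-scale argument only returns the coarser modulus $\eta_{1,p}V$ together with a spurious power of $r_0$. This borderline is precisely where $1\le\alpha\le 2$ is decisive, being equivalent to $\alpha(2-\alpha)\ge 0$, which keeps the complementary exponent at most $n$ (the endpoint $\alpha=2$ being the critical case). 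To circumvent it I would decompose $\Phi$ over the dyadic annuli $\{2^{-k-1}(2r_0)<|x-y|\le 2^{-k}(2r_0)\}$, on each of which $|x-y|^{-(n-1)}\approx(2^{-k}r_0)^{-(n-1)}$, estimate the local mass by $\int_{|x-y|<\rho}|V(x)|^p\,dx\le\rho^{\,n-\alpha}[\eta_{\alpha,p}V(\rho)]^p$, and recombine the pieces with the factors $|u(y)|^{\alpha-1}|\nabla u(y)|$ before summing. I expect this recombination, rather than a bound on $\Phi$ alone, to be the crux: the doubling property of $\eta_{\alpha,p}V$ should be exactly what keeps the scale summation geometric and absorbs the logarithmic borderline into the constant $C(n,\alpha)$, delivering the sharp bound $[\eta_{\alpha,p}V(r_0)]^p\int_{B_0}|\nabla u|^\alpha$ without an extraneous factor of $r_0$.
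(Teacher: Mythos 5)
Your reduction and your treatment of $\alpha=1$ match the paper's, but for $1<\alpha\le 2$ the plan breaks exactly at the point you flag as the crux, and the fix you sketch does not work. The quantity $\Phi(y)=\int_{B_0}|V(x)|^p|x-y|^{-(n-1)}\,dx$ carries a kernel that is \emph{more} singular at $x=y$ than the kernel $|x-y|^{-(n-\alpha)}$ defining $\eta_{\alpha,p}V$, so membership in $\tilde S_{\alpha,p}$ gives no control of $\Phi$ near the diagonal: on the annulus $2^{-k-1}R<|x-y|\le 2^{-k}R$ your local-mass estimate yields at best $(2^{-k}R)^{-(n-1)}(2^{-k}R)^{n-\alpha}[\eta_{\alpha,p}V(2^{-k}R)]^p=(2^{-k}R)^{1-\alpha}[\eta_{\alpha,p}V(2^{-k}R)]^p$, and since $1-\alpha<0$ the factors $2^{k(\alpha-1)}R^{1-\alpha}$ \emph{grow} in $k$, so the scale summation diverges; the doubling inequality only relates $\eta$ at $2r$ to $\eta$ at $r$ and cannot supply the geometric decay you are counting on. Indeed $\Phi(y_0)$ can equal $+\infty$ for $V\in\tilde S_{\alpha,p}$ (take $|V(x)|^p\sim|x-y_0|^{-\alpha+\varepsilon}$ near $y_0$ with $\varepsilon$ small and $\alpha$ near $2$), so no pointwise bound on $\Phi$, dyadic or otherwise, can be the route, and ``recombining with $|u(y)|^{\alpha-1}|\nabla u(y)|$ before summing'' is left unspecified --- that factor does not depend on $k$ and cannot cure the divergence. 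The one-scale fallback you mention loses a factor $r_0^{2(\alpha-1)}$ via Poincar\'e, which is not acceptable since the theorem is asserted for balls of arbitrary radius.

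The paper's proof never forms $\Phi$: it applies the representation \eqref{th 2.2-2.1} to a single factor of $|u|$, so that after Fubini the inner integral is $F(y)=\int_{B_0}|u(x)|^{\alpha-1}|V(x)|^p|x-y|^{-(n-1)}\,dx$, with $|u|^{\alpha-1}$ kept next to $|V|^p$ in the $x$-variable. A second application of H\"older splits $F(y)$ into a pure-$V$ average and a copy of the target quantity $\int_{B_0}|u|^\alpha|V|^p$; Minkowski's integral inequality (this is where $\alpha\le 2$, i.e.\ $\tfrac{1}{\alpha-1}\ge 1$, is really used) combined with the kernel-composition estimate $\int_{B_0}|x-y|^{-\frac{n-1}{\alpha-1}}|z-y|^{-(n-1)}\,dy\le C|x-z|^{1-\frac{n-1}{\alpha-1}}$ of Lemma \ref{lem 2.3} is what actually produces the Stummel kernel $|x-z|^{-(n-\alpha)}$; and the argument closes by absorbing $\bigl(\int_{B_0}|u|^\alpha|V|^p\bigr)^{\frac{\alpha-1}{\alpha}}$ into the left-hand side. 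These three ingredients --- keeping $|u(x)|^{\alpha-1}|V(x)|^p$ together, the composition lemma, and the absorption step --- constitute the substance of the proof and are all absent from your proposal.
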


\begin{remark}
The assumption  that the function $ u $ belongs to $ C^{\infty}_{0}(\mathbb{R}^{n}) $ in Theorem \ref{th 2.1} and
Theorem \ref{th 2.2} can be weakened by the assumption that $ u $ has a weak gradient in a ball
$ B \subset \mathbb{R}^{n} $ and a compact support in $ B $ (see \cite[p.480]{WZ}).
\end{remark}

In 1983, C. Fefferman \cite{F} proved Theorem \ref{th 2.1} for the case $ V \in L^{p,n-2p}$, where $ 1 < p \leq \frac{n}{2} $.
The inequality \eqref{feff-in Morrey} is now known as \textbf{Fefferman's inequality}. Chiarenza and Frasca \cite{CF} extended
the result \cite{F} by proving Theorem \ref{th 2.1} under the assumption that $ V \in L^{p,n-\alpha p} $, where $ 1 < \alpha < n $
and $ 1 < p \leq \frac{n}{\alpha}$. By setting
$ \varphi(t) = t^{n-\alpha p} $ in Theorem \ref{th 2.1}, we can recover the result in \cite{CF} and \cite{F}.

For the particular case  where $ V \in \tilde{S}_{2} $, Theorem \ref{th 2.2} is proved by Zamboni \cite{Z1},
and can be also concluded by applying the result Fabes \textit{et al.} in \cite[p.197]{FGL} with an additional assumption
that $ V $ is a radial function. Recently, this  result is reproved in  \cite{CRR}. Although $ \tilde{S}_{\alpha} \subset \tilde{S}_{2} $ whenever $ 1 \leq \alpha \leq 2 $ \cite[p.553]{THG}, the authors still do not know how to deduce Theorem \ref{th 2.2} from this result.

It must be noted that  Theorems \ref{th 2.1} and  \ref{th 2.2} are independent each other,
which means  that $ L^{p,n-\alpha p}$, where $ 1 < \alpha < n $ and $ 1 < p \leq \frac{n}{\alpha}$,
is not contained in $ S_{\alpha,p} $. Conversely,  $S_{\alpha,p}$ is not contained in $ L^{p,n-\alpha p}$.
Indeed, if we define $ V_{1}: \mathbb{R}^{n} \rightarrow \mathbb{R} $ by the formula $ V_{1}(y):=|y|^{-\alpha}$,
then $ V_{1} \in L^{p,n-\alpha p} $, but $ V_{1} \notin \tilde{S}_{\alpha,p}$. For the function
$ V_{2}: \mathbb{R}^{n} \rightarrow \mathbb{R} $ which is defined by the formula $ V_{2}(y):=|y|^{-\frac{1}{p}}$,
we have $ V_{2} \in \tilde{S}_{\alpha,p}$, but $ V_{2} \notin L^{p,n-\alpha p}$.

In order to apply Theorems \ref{th 2.1} and \ref{th 2.2}, let us recall the following definitions. Let $\Omega$ be an open and bounded subset of $\mathbb{R}^{n}$.
Recall that the \textbf{Sobolev space} $ H^1(\Omega) $ is the set of all functions $u\in L^2(\Omega)$
for which $\displaystyle \frac{\partial u}{\partial x_i} \in L^2(\Omega)$ for all $i=1, \ldots, n$, and is equipped by the \textbf{Sobolev norm}
\begin{equation*}
\| u \|_{H^1(\Omega)} = \| u \|_{L^2(\Omega)} + \sum_{i= 1}^{n} \left\|  \frac{\partial u}{\partial x_i} \right\|_{L^2(\Omega)}.
\end{equation*}
The closure of $u \in C_{\rm 0}^{\infty}(\Omega)$ in $ H^1(\Omega) $ under the Sobolev norm is denoted by $ H^1_{0}(\Omega) $.

Define the operator $L$ on $H^1_{0}(\Omega)$ by
\begin{equation}\label{b1}
Lu:= -\sum_{i, j = 1}^{n} \frac{\partial}{\partial x_{i}} \left( a_{ij} \frac{\partial u}{\partial x_{j}}
\right) + \sum_{i= 1}^{n} b_{i}\frac{\partial u}{\partial x_{i}} + Vu
\end{equation}
where $a_{ij} \in L^{\infty}(\Omega) $, $b_{i}$ ($ i,j=1, \ldots, n $) and $V$ are real valued measurable functions on $ \mathbb{R}^{n} $.
Throughout this paper, we assume that the matrix $ a(x) := (a_{ij}(x)) $ is symmetric
on $\Omega$ and that the ellipticity and boundedness conditions
\begin{equation}\label{b2}
\lambda |\xi|^{2} \leq \sum_{i, j = 1}^{n} a_{ij}(x) \xi_{i}\xi_{j} \leq \lambda^{-1} |\xi|^{2}
\end{equation}
hold for some $\lambda > 0$, for all $\xi \in \mathbb{R}^{n}$, and for almost all $ x \in \Omega $.

There are two assumptions on the potentials of the operator $L$ \eqref{b1} in this paper. We assume either:
\begin{equation}\label{a.4}
\begin{cases}
& \varphi \,\, \text{satisfies  \eqref{a1}, \eqref{a2}, \eqref{a3} ($ 1 < \alpha \leq 2 $)}, \\
& b_{i}^{2} \in L^{p,\varphi}, \, i=1, \ldots, n, \\
& V \in  L^{p,\varphi} \cap L^{2}_{\rm loc}(\mathbb{R}^{n}),
\end{cases}
\end{equation}
or,
\begin{equation}\label{a.5}
\begin{cases}
& 1 \leq \alpha \leq 2, \\
& b_{i}^{2} \in \tilde{S}_{\alpha}, \, i=1, \ldots, n, \\
& V \in \tilde{S}_{\alpha}.
\end{cases}
\end{equation}

We say that $u \in H^{1}_{0}(\Omega)$ is a \textbf{weak solution} of the equation
\begin{equation}\label{b3}
Lu = 0
\end{equation}
if
\begin{equation}\label{b3-b}
\int\limits_{\Omega} \left(  \sum_{i, j = 1}^{n} a_{ij} \frac{\partial u}{\partial x_{i}}
\frac{\partial \psi}{\partial x_{j}} + \sum_{i= 1}^{n} b_{i} \frac{\partial u}{\partial x_{i}}
\psi + Vu \psi \right)dx = 0.
\end{equation}
for all $ \psi \in H^{1}_{0}(\Omega)$ (see the definition in \cite{CRR,E,Z1}).
Note that, for the case $ \alpha = 2$, the equation \eqref{b3} was considered in \cite{CRR,Z1}.
If we choose $ b_{i} = 0$ for all $ i = 1, \dots, n$, then \eqref{b3} becomes the Schr\"{o}dinger equation \cite{CFG}.

A locally integrable function $ f $ on $ \mathbb{R}^{n} $ is said to be of
\textbf{bounded mean oscillation} on a ball $ B \subseteq \mathbb{R}^{n} $ if there is a
constant $ C>0 $ such that for every ball $ B' \subseteq B $,
\begin{equation*}
\frac{1}{|B'|} \int\limits_{B'} |f(y)-f_{B'}|dy \leq C.
\end{equation*}
We write $ f \in BMO(B) $ if $ f $ is of bounded mean oscillation on $ B$.
Moreover, if $ 1 \leq \alpha < \infty $ and there is a
constant $ C>0 $ such that for every ball $ B' \subseteq B$,
\begin{equation*}
\left( \frac{1}{|B'|} \int\limits_{B'} |f(y)-f_{B'}|^{\alpha}dy \right)^{\frac{1}{\alpha}}  \leq C,
\end{equation*}
we write $ f \in BMO_{\alpha}(B)$. By using H\"{o}lder's inequality and the John-Nirenberg theorem (we refer to \cite{K} for more detail about this John-Nirenberg Theorem), we can prove that $ BMO_{\alpha}(B) = BMO(B) $.

As an application  Theorems \ref{th 2.1} and  \ref{th 2.2} to equation $ Lu=0 $ \eqref{b3}, we have the following result.

\begin{theorem}\label{th 3.1}
Let $u \geq 0$ be a weak solution of $Lu=0$ and $B(x,2r) \subseteq \Omega$ where $ r \leq 1$.
Then there exists a constant $C>0$ such that
\begin{equation*}
	\frac{1}{|B(x,r)|} \int\limits_{B(x,r)}\left| \log(u+\delta) - \log(u+\delta)_{B(x,r)} \right|^{\alpha}dy \leq C,
\end{equation*}
for every $ \delta > 0 $.
\end{theorem}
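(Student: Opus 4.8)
The plan is to show that $v := \log(u+\delta)$ obeys a Caccioppoli-type estimate $\int_{B(x,r)}|\nabla v|^{2}\,dy \le C\,r^{\,n-2}$ with $C$ independent of $\delta$, and then to pass from this gradient bound to the stated mean-oscillation bound via Hölder's and Poincaré's inequalities. Since $u\ge 0$ and $\delta>0$, the function $v$ lies in $H^{1}_{\rm loc}$ with weak gradient $\nabla v = \nabla u/(u+\delta)$, so that $\phi^{2}|\nabla v|^{2}=\phi^{2}|\nabla u|^{2}/(u+\delta)^{2}$ for any cut-off $\phi$.

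First I would fix $\phi\in C^{\infty}_{0}(B(x,\tfrac{3r}{2}))$ with $0\le\phi\le 1$, $\phi\equiv 1$ on $B(x,r)$, and $|\nabla\phi|\le C/r$; this is legitimate because $B(x,2r)\subseteq\Omega$. Inserting the test function $\psi := \phi^{2}/(u+\delta)$ into the weak formulation \eqref{b3-b} — note $\psi\in H^{1}_{0}(\Omega)$ since $u+\delta\ge\delta>0$ so both $\psi$ and $\partial_{j}\psi = 2\phi\,\partial_{j}\phi/(u+\delta) - \phi^{2}\,\partial_{j}u/(u+\delta)^{2}$ are in $L^{2}$ with compact support — the ellipticity lower bound in \eqref{b2} produces the quantity $\lambda\int\phi^{2}|\nabla v|^{2}$ from the leading term. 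The remaining three terms are handled by the boundedness in \eqref{b2} together with the Cauchy--Schwarz inequality for the form $(a_{ij})$, by the pointwise bound $u/(u+\delta)\le 1$, and by Young's inequality; after absorbing the resulting $\int\phi^{2}|\nabla v|^{2}$ contributions into the left-hand side one arrives at
\begin{equation*}
\int\phi^{2}|\nabla v|^{2}\,dy \le C\left(\int|\nabla\phi|^{2}\,dy + \sum_{i=1}^{n}\int\phi^{2}b_{i}^{2}\,dy + \int|V|\phi^{2}\,dy\right).
\end{equation*}

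The heart of the argument, and the step where the hypotheses \eqref{a.4} or \eqref{a.5} enter, is to control the two potential terms, and this is exactly where Fefferman's inequality is applied. Here I would use that $0\le\phi\le 1$ and $\alpha\le 2$ give $\phi^{2}\le\phi^{\alpha}$ pointwise, so that Theorem \ref{th 2.1} (applied to $V$ and to each $b_{i}^{2}$ under \eqref{a.4}) or Theorem \ref{th 2.2} with $p=1$ (under \eqref{a.5}), with the role of $u$ there played by the cut-off $\phi$, yields $\int|V|\phi^{2}\le\int|V|\phi^{\alpha}\le C\int|\nabla\phi|^{\alpha}$ and likewise $\int\phi^{2}b_{i}^{2}\le C\int|\nabla\phi|^{\alpha}$; in the Stummel case the constant stays uniform because $\eta_{\alpha}V(\tfrac{3r}{2})\le\eta_{\alpha}V(\tfrac{3}{2})<\infty$ by monotonicity and $r\le 1$. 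Since $|\nabla\phi|\le C/r$ is supported in a ball of radius $\tfrac{3r}{2}$, one has $\int|\nabla\phi|^{\alpha}\le C\,r^{\,n-\alpha}$ and $\int|\nabla\phi|^{2}\le C\,r^{\,n-2}$, and because $r\le 1$ and $\alpha\le 2$ we have $r^{\,n-\alpha}\le r^{\,n-2}$. Combining everything gives $\int\phi^{2}|\nabla v|^{2}\le C\,r^{\,n-2}$, and restricting to $B(x,r)$, where $\phi\equiv 1$, produces the Caccioppoli bound $\int_{B(x,r)}|\nabla v|^{2}\,dy\le C\,r^{\,n-2}$ with $C$ independent of $\delta$.

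Finally I would convert this gradient bound into the statement. Hölder's inequality with exponent $2/\alpha$ gives $\int_{B(x,r)}|\nabla v|^{\alpha}\le \big(\int_{B(x,r)}|\nabla v|^{2}\big)^{\alpha/2}|B(x,r)|^{1-\alpha/2}\le C\,r^{\,n-\alpha}$ (an identity when $\alpha=2$), and then the Poincaré inequality on $B(x,r)$ yields
\begin{equation*}
\frac{1}{|B(x,r)|}\int_{B(x,r)}|v - v_{B(x,r)}|^{\alpha}\,dy \le C\,r^{\alpha}\,\frac{1}{|B(x,r)|}\int_{B(x,r)}|\nabla v|^{\alpha}\,dy \le C,
\end{equation*}
which is precisely the assertion, all constants being independent of $\delta$. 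The main obstacle I anticipate is the careful verification that $\psi$ is an admissible test function and that the potential integrals are finite and correctly absorbed by the two Fefferman inequalities uniformly in $\delta$; once the device $\phi^{2}\le\phi^{\alpha}$ is in place, the decisive quantitative point is the matching of the powers $r^{\,n-\alpha}$ and $r^{\,n-2}$ through $r\le 1$.
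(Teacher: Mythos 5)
Your proposal is correct and follows essentially the same route as the paper: a Moser-type logarithmic Caccioppoli estimate obtained by testing the weak formulation with $(\text{cutoff})^{\beta}/(u+\delta)$, with the $b_i^2$ and $V$ terms controlled by applying Theorem \ref{th 2.1} or Theorem \ref{th 2.2} to the cutoff itself (the paper uses $\psi^{\alpha+1}$ and the bound $\psi^{\alpha+1}\le\psi^{\alpha}$ where you use $\phi^{2}\le\phi^{\alpha}$), followed by H\"older's and Poincar\'e's inequalities. The only substantive difference is that the paper justifies admissibility by approximating $u$ with smooth functions $u_k$ and passing to the limit via dominated convergence, whereas you insert $\phi^{2}/(u+\delta)$ directly after checking it lies in $H^{1}_{0}(\Omega)$, which is legitimate since $u+\delta\ge\delta>0$.
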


Theorem \ref{th 3.1} tells us that $ \log(u+\delta) \in BMO_{\alpha}(B) $, where $ B $ is an open ball which is contained in $ \Omega $, and
$ u $ is the non-negative weak solution of equation $ Lu =0 $, given by \eqref{b3}. Under the assumptions that $b^{2}_{i} $ and
$ V $ belong to $ \tilde{S}_{2} $, Theorem \ref{th 3.1} is obtained in \cite{CFG} for the case Schr\"{o}dinger equation ($ b_{i} = 0 $) and
in \cite{Z1,CRR} for the case $ Lu = 0 $ ($ b_{i} \neq 0 $).
To the best of our  knowledge, the assumptions in \eqref{a.4} have never been used
for proving Theorem \ref{th 3.1} as well as the assumption $ \alpha \in \left[ 1,2\right)  $ in \eqref{a.5}.

Let $ w \in L^{1}_{\rm loc}(\Omega) $ and $ w \geq 0 $ in $ \Omega $. The function $w$
is said to \textbf{vanish of infinite order} at $x_{0} \in \Omega$ if
\begin{equation*}
\lim_{r \rightarrow 0} \frac{1}{|B(x_{0},r)|^{k}} \int\limits_{B(x_{0},r)} w(x)dx = 0, \qquad \forall k>0.
\end{equation*}
The equation $ Lu = 0$, which is given in \eqref{b3}, is said to have the \textbf{strong unique continuation property} in $ \Omega $
if for every nonnegative solution $ u $ which vanishes of infinite order at some $x_{0} \in \Omega$, then $ u \equiv 0 $ in
$ B(x_{0},r) $ for some $ r>0 $. See this definition, for example in \cite{GL,JK,KT}.

Theorem \ref{th 3.1} gives the  following result.

\begin{corollary}\label{cor 2}
The equation $ Lu=0 $ has the strong unique continuation property in $ \Omega$.
\end{corollary}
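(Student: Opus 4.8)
The plan is to run the classical Garofalo--Lin scheme, which links the logarithmic $BMO$ bound of Theorem \ref{th 3.1} to a doubling property of a power of $u$, and then to observe that a doubling density cannot vanish of infinite order at a point without vanishing identically near it. Throughout I would fix a ball $B_{0} := B(x_{0}, R_{0})$ with $B(x_{0}, 2R_{0}) \subseteq \Omega$ and $R_{0} \leq 1$, and take all sub-balls $B'$ inside $B_{0}$.

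First I would record that, by Theorem \ref{th 3.1} together with the identity $BMO_{\alpha}(B_{0}) = BMO(B_{0})$, the function $f_{\delta} := \log(u+\delta)$ lies in $BMO(B_{0})$ with norm bounded by a constant independent of $\delta > 0$. Applying the John--Nirenberg theorem on each sub-ball $B' \subseteq B_{0}$ gives exponential integrability of $f_{\delta} - (f_{\delta})_{B'}$; choosing an exponent $0 < \beta \leq 1$ below the John--Nirenberg threshold and using $e^{f_{\delta}} = u+\delta$, the bounds for $e^{\pm\beta(f_{\delta}-(f_{\delta})_{B'})}$ combine (the factors $e^{\pm\beta (f_{\delta})_{B'}}$ cancel) into the $A_{2}$-type inequality
\begin{equation*}
\left( \frac{1}{|B'|}\int_{B'} (u+\delta)^{\beta}\,dx \right)\left( \frac{1}{|B'|}\int_{B'} (u+\delta)^{-\beta}\,dx \right) \leq C,
\end{equation*}
with $C$ independent of $\delta$ and of $B'$. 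A routine computation (using Jensen's inequality on the $w^{-1}$ factor over the smaller ball) upgrades this to a doubling inequality $\mu_{\delta}(2B') \leq C_{d}\,\mu_{\delta}(B')$ for the measure $d\mu_{\delta} := (u+\delta)^{\beta}\,dx$, with $C_{d}$ again independent of $\delta$.

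Next I would let $\delta \downarrow 0$. Since $(u+\delta)^{\beta}$ decreases monotonically to $u^{\beta}$ and is dominated by $(u+1)^{\beta} \in L^{1}_{\rm loc}$, dominated convergence transfers the doubling estimate to $d\mu := u^{\beta}\,dx$, that is $\mu(B(x_{0},2r)) \leq C_{d}\,\mu(B(x_{0},r))$ for all small $r$. Iterating this downward from a fixed radius $R < R_{0}$ yields the lower bound $\mu(B(x_{0},r)) \geq c\,(r/R)^{s}\,\mu(B(x_{0},R))$ with $s := \log_{2} C_{d}$, valid whenever $\mu(B(x_{0},R)) > 0$.

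Finally I would argue by contradiction: suppose $u$ vanishes of infinite order at $x_{0}$ yet $u \not\equiv 0$ on every ball about $x_{0}$. Then $\mu(B(x_{0},R)) > 0$ for some $R < R_{0}$, so the lower bound forces $\mu(B(x_{0},r)) \gtrsim r^{s}$. On the other hand, H\"older's inequality (here is where $\beta \leq 1$ is used) propagates the infinite-order vanishing of $u$ to $u^{\beta}$, giving $\mu(B(x_{0},r)) = o(r^{nk})$ for every $k$; choosing $nk > s$ contradicts the lower bound. Hence $\mu(B(x_{0},R)) = 0$ for all small $R$, so $u \equiv 0$ near $x_{0}$, which is precisely the strong unique continuation property. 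I expect the only delicate points to be the uniformity in $\delta$ of the John--Nirenberg and doubling constants (so that the passage $\delta \to 0$ is legitimate) and the clean transfer of infinite-order vanishing from $u$ to $u^{\beta}$; once these are in place, the doubling-versus-vanishing contradiction is immediate.
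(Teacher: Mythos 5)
Your proposal is correct and follows essentially the same route as the paper: Theorem \ref{th 3.1} gives the uniform $BMO$ bound for $\log(u+\delta)$, John--Nirenberg yields the $A_2$-type inequality and hence a $\delta$-uniform doubling estimate for $(u+\delta)^{\beta}dx$ (the paper's Lemma \ref{log of weak solution}), the limit $\delta\to 0$ transfers this to $u^{\beta}dx$, and the iteration of doubling against infinite-order vanishing (the paper's Lemmas \ref{lem 1.2} and \ref{lem 1.3}, which you phrase as a contradiction with the lower bound $\mu(B(x_0,r))\gtrsim r^{s}$) forces $u\equiv 0$ near $x_0$. The only cosmetic difference is that the paper splits into the cases $\beta<1$ and $\beta\ge 1$ rather than normalizing to $\beta\le 1$ from the start.
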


This strong unique continuation property was studied by several authors. For example, Chiarenza and Garofalo in \cite{CF}
discussed the Schr\"{o}dinqer inequality of the form $ Lu+ Vu \geq 0$, where the potential $ V $ belongs to Lorentz spaces $ L^{\frac{n}{2},\infty} (\Omega)$. For the differential inequality of the form $ | \Delta u | \leq |V| |u|  $ where its potential
also belong to $ L^{\frac{n}{2}}(\Omega)$, see Jerison and Kenig \cite{JK}. Garofalo and Lin \cite{GL} studied the equation \eqref{b3} where the potentials are bounded by certain functions.

Fabes \textit{et al.} studied the strong unique continuation property for Schr\"{o}dinqer equation $ -\Delta u + Vu = 0 $,
where the assumption for $ V $ is radial function in $ S_{2} $ \cite{FGL}. Meanwhile, Zamboni \cite{Z1} and Castillo \textit{et al.} \cite{CRR} also studied the equation \eqref{b3} under the assumption that the potentials belong to $ S_{2} $. At the end of this paper, we will give an example
of Schr\"{o}dinqer equation $ -\Delta u + Vu = 0 $ that does not satisfy the strong unique continuation property, where $ V \in L^{p,n-4p} $ or
$ V \in \tilde{S}_{\beta} $ for all $ \beta \geq 4 $.

\section{Proofs}\label{sec:1}

In this section, we prove  Fefferman's inequalities, which we state as Theorems \ref{th 2.1} and  \ref{th 2.2}
above. First, we start with the case where the potential belongs to a generalized Morrey spaces. 
Second, we consider the potential from a Stummel class. Furthermore, we present an inequality which is deduced from this inequality.

\subsection{Fefferman's Inequality in Generalized Morrey Spaces}

We start with the following lemma for potentials in generalized Morrey spaces.

\begin{lemma}\label{lem 2.1}
	Let $1<p<\infty$ and $ \varphi $ satisfy the conditions \eqref{a1} and \eqref{a2}. If $ 1 < \gamma < p $ and $ V \in L^{p,\varphi}$, then
	$ [M(|V|^{\gamma})]^{\frac{1}{\gamma}} \in A_{1} \cap L^{p,\varphi}$.
\end{lemma}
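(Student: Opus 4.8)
The plan is to verify the two membership assertions separately, since the claim $[M(|V|^{\gamma})]^{1/\gamma}\in A_1\cap L^{p,\varphi}$ splits naturally into an $A_1$ part and a Morrey part. As a preliminary remark, since $V\in L^{p,\varphi}\subseteq L^p_{\rm loc}(\mathbb{R}^n)$ and $1<\gamma<p$, the function $g:=|V|^{\gamma}$ lies in $L^{p/\gamma}_{\rm loc}(\mathbb{R}^n)$ with $p/\gamma>1$; in particular $g\in L^1_{\rm loc}(\mathbb{R}^n)$, so $M(g)$ is well defined and finite almost everywhere.

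For the $A_1$ assertion I would invoke directly the maximal-function property recalled in the introduction: for any $f\in L^{s}_{\rm loc}(\mathbb{R}^n)$ and any exponent $\theta\in(0,1)$, the function $[M(f)]^{\theta}$ is an $A_1$ weight. Applying this with $f=g=|V|^{\gamma}$ and $\theta=1/\gamma$, which lies in $(0,1)$ because $\gamma>1$, we obtain at once that $[M(|V|^{\gamma})]^{1/\gamma}\in A_1$.

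For the $L^{p,\varphi}$ assertion the key point is a change-of-exponent bookkeeping that preserves the weight $\varphi$. Set $q:=p/\gamma$, so that $1<q<\infty$ and $\gamma q=p$. For every ball $B(x,r)$ one has
\[
\frac{1}{\varphi(r)}\int\limits_{|x-y|<r}|g(y)|^{q}\,dy=\frac{1}{\varphi(r)}\int\limits_{|x-y|<r}|V(y)|^{p}\,dy,
\]
and taking the supremum over $x$ and $r$ gives $\|g\|_{L^{q,\varphi}}^{q}=\|V\|_{L^{p,\varphi}}^{p}$; in particular $g\in L^{q,\varphi}$. Since $q>1$ and $\varphi$ satisfies \eqref{a1} and \eqref{a2}, the boundedness of $M$ on $L^{q,\varphi}$ recalled in the introduction yields $\|M(g)\|_{L^{q,\varphi}}\le C(n,q)\|g\|_{L^{q,\varphi}}$.

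Finally I would run the same identity in reverse. Because $\big([M(g)]^{1/\gamma}\big)^{p}=[M(g)]^{q}$, we get
\[
\big\|[M(g)]^{1/\gamma}\big\|_{L^{p,\varphi}}^{p}=\|M(g)\|_{L^{q,\varphi}}^{q}\le [C(n,q)]^{q}\,\|g\|_{L^{q,\varphi}}^{q}=[C(n,q)]^{q}\,\|V\|_{L^{p,\varphi}}^{p},
\]
so that $[M(|V|^{\gamma})]^{1/\gamma}\in L^{p,\varphi}$ with norm controlled by $\|V\|_{L^{p,\varphi}}$, completing the proof together with the $A_1$ part. There is no serious obstacle here: the argument is essentially a single change of exponent, and the only delicate point is the bookkeeping that forces the choice $q=p/\gamma$ precisely so that $\gamma q=p$ and the \emph{same} weight $\varphi$ is retained, allowing the Morrey boundedness of $M$ to be applied directly.
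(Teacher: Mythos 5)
Your proof is correct and follows essentially the same route as the paper's: the $A_{1}$ part is quoted from the maximal-function properties recalled in the introduction, and the $L^{p,\varphi}$ part is exactly the paper's identity $\|[M(|V|^{\gamma})]^{1/\gamma}\|_{L^{p,\varphi}}=\|M(|V|^{\gamma})\|_{L^{p/\gamma,\varphi}}^{1/\gamma}$ combined with the boundedness of $M$ on $L^{p/\gamma,\varphi}$, just written out with the exponent bookkeeping made explicit.
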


\begin{proof}
	According to our discussion above, $ [M(|V|^{\gamma})]^{\frac{1}{\gamma}} \in A_{1}$.
	Using the boundedness of the Hardy-Littlewood maximal operator on generalized Morrey spaces, we have
	\begin{equation*}
	\| [M(|V|^{\gamma})]^{\frac{1}{\gamma}} \|_{L^{p,\varphi}} = \| M(|V|^{\gamma})
	\|_{L^{\frac{p}{\gamma},\varphi}}^{\frac{1}{\gamma}}
	\leq C \| V \|_{L^{p,\varphi}} < \infty.
	\end{equation*}
	Therefore $ [M(|V|^{\gamma})]^{\frac{1}{\gamma}} \in L^{p,\varphi} $.
\end{proof}

Using Lemma \ref{lem 2.1}, we obtain the following property.

\begin{lemma}\label{lem 2.2}
	Let $ \varphi $ satisfy the conditions \eqref{a1}, \eqref{a2}, and \eqref{a3}.
	If $ V \in L^{p,\varphi}$, then
	\begin{equation*}
	\int_{\mathbb{R}^{n}} \frac{|V(y)|}{|x-y|^{n-1}} \, dy
	\leq C(n,\alpha,p) \| V \|_{L^{p,\varphi}}^{\frac{1}{\alpha}} [M(V)(x)]^{\frac{\alpha-1}{\alpha}}.
	\end{equation*}
\end{lemma}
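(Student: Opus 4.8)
The plan is to read the left-hand side as a Riesz potential of order one applied to $|V|$ and to run a Hedberg-type argument: split the integral at a radius $\delta>0$ into a part near $x$ and a part away from $x$, estimate each separately, and then optimize in $\delta$. The shape of the target bound $\|V\|_{L^{p,\varphi}}^{1/\alpha}[M(V)(x)]^{(\alpha-1)/\alpha}$ is exactly what emerges from balancing a term of size $\delta\,M(V)(x)$ against a term of size $\|V\|_{L^{p,\varphi}}\,\delta^{1-\alpha}$, and this observation dictates the two estimates below. Here $1<\alpha<n$ and $1<p<\frac{n}{\alpha}$ are the exponents attached to \eqref{a3}, and one may assume $0<M(V)(x)<\infty$ (true for a.e.\ $x$ since $V\in L^p_{\mathrm{loc}}$ with $p>1$, the remaining cases being trivial).

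First I would handle the near part $\int_{|x-y|<\delta}|V(y)|\,|x-y|^{-(n-1)}\,dy$. Writing $|x-y|^{-(n-1)}=(n-1)\int_{|x-y|}^\infty s^{-n}\,ds$ and applying Fubini (equivalently, decomposing into dyadic annuli $2^{-k-1}\delta\le|x-y|<2^{-k}\delta$), every ball average of $|V|$ centered at $x$ is controlled by $M(V)(x)$, and a direct computation gives $\int_{|x-y|<\delta}|V(y)|\,|x-y|^{-(n-1)}\,dy\le C(n)\,\delta\,M(V)(x)$. This step uses nothing beyond the definition of the maximal function.

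The far part $\int_{|x-y|\ge\delta}|V(y)|\,|x-y|^{-(n-1)}\,dy$ is the crux, and the key is to apply H\"older's inequality after splitting the kernel as $|x-y|^{-(n-1)}=|x-y|^{-a}\cdot|x-y|^{-(n-1)+a}$ with the \emph{precise} exponent $a:=\frac{n}{p}-\frac{\alpha+1}{2}$; note $a>0$, since $p<\frac{n}{\alpha}$ forces $\frac{n}{p}>\alpha>1$. H\"older with exponents $p,p'$ separates the two factors. For the $|V|^p$-weighted factor, the identity $|x-y|^{-ap}=ap\int_{|x-y|}^\infty t^{-ap-1}\,dt$ together with Fubini and the Morrey bound $\int_{|x-y|<t}|V|^p\le\varphi(t)\,\|V\|_{L^{p,\varphi}}^p$ reduces the integral to $ap\,\|V\|_{L^{p,\varphi}}^p\int_\delta^\infty \varphi(t)\,t^{-ap-1}\,dt$; the choice of $a$ makes the exponent $ap+1=(n+1)-\frac{p}{2}(\alpha+1)$, so this is \emph{exactly} Nakai's condition \eqref{a3} and is $\le C\,\|V\|_{L^{p,\varphi}}^p\,\delta^{\frac{p}{2}(1-\alpha)}$, i.e.\ the factor to the power $1/p$ is $\le C\,\|V\|_{L^{p,\varphi}}\,\delta^{\frac{1-\alpha}{2}}$. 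The pure-kernel factor $\big(\int_{|x-y|\ge\delta}|x-y|^{(a-(n-1))p'}\,dy\big)^{1/p'}$ converges (again because $\alpha>1$, which is what guarantees $a<\frac{n}{p}-1$) and equals a constant times $\delta^{\frac{1-\alpha}{2}}$. Multiplying the two factors yields $\int_{|x-y|\ge\delta}|V(y)|\,|x-y|^{-(n-1)}\,dy\le C(n,\alpha,p)\,\|V\|_{L^{p,\varphi}}\,\delta^{1-\alpha}$. The main obstacle is precisely this bookkeeping: locating the one value of $a$ that keeps both factors integrable while lining the $|V|^p$-integral up with the fixed exponent appearing in \eqref{a3}.

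Finally I would combine the two estimates into $\int_{\mathbb{R}^n}|V(y)|\,|x-y|^{-(n-1)}\,dy\le C(n)\,\delta\,M(V)(x)+C(n,\alpha,p)\,\|V\|_{L^{p,\varphi}}\,\delta^{1-\alpha}$, valid for every $\delta>0$, and optimize. Choosing $\delta=\big(\|V\|_{L^{p,\varphi}}/M(V)(x)\big)^{1/\alpha}$ balances the two terms and produces $C(n,\alpha,p)\,\|V\|_{L^{p,\varphi}}^{1/\alpha}[M(V)(x)]^{(\alpha-1)/\alpha}$, which is the assertion.
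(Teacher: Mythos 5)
Your proposal is correct and follows essentially the same route as the paper: the same Hedberg-type splitting at radius $\delta$, the same H\"older exponent $a=\frac{q}{p}$ with $q=n-\frac{p}{2}(\alpha+1)$ chosen to line the Morrey estimate up with Nakai's condition \eqref{a3}, and the same optimization $\delta=\bigl(\|V\|_{L^{p,\varphi}}/M(V)(x)\bigr)^{1/\alpha}$. The only cosmetic differences are that you derive the near-part bound and the far-part Morrey bound via layer-cake/Fubini identities where the paper cites Hedberg's lemma and uses dyadic annuli, which changes nothing of substance.
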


\begin{proof}
	Let $ \delta > 0$. Then
	\begin{equation}\label{in lem 2.1-1}
	\int_{\mathbb{R}^{n}} \frac{|V(y)|}{|x-y|^{n-1}} \, dy
	= \int_{|x-y|<\delta} \frac{|V(y)|}{|x-y|^{n-1}} \, dy + \int_{|x-y| \geq \delta} \frac{|V(y)|}{|x-y|^{n-1}} \, dy.
	\end{equation}
	Using Lemma (a) in \cite{H}, we have
	\begin{equation}\label{in lem 2.1-2}
	\int_{|x-y|<\delta} \frac{|V(y)|}{|x-y|^{n-1}} \, dy \leq C(n) M(V)(x) \delta.
	\end{equation}
	For the second term on the right hand side \eqref{in lem 2.1-1}, let $ q = n - \frac{p}{2}(\alpha+1)$,
	we use H\"{o}lder's inequality to obtain
	\begin{align}\label{in lem 2.1-3}
	\int_{|x-y| \geq \delta} \frac{|V(y)|}{|x-y|^{n-1}} \, dy
	& = \int_{|x-y| \geq \delta} \frac{|V(y)||x-y|^{ \frac{q}{p} + 1-n}}{|x-y|^{\frac{q}{p}}} \, dy \nonumber \\
	& \leq \left( \int_{|x-y| \geq \delta} \frac{|V(y)|^{p}}{|x-y|^{q}} \, dy \right)^{\frac{1}{p}} \nonumber \\
	& \qquad \times \left( \int_{|x-y| \geq \delta} |x-y|^{ (\frac{q}{p} + 1-n)(\frac{p}{p-1}) }  dy \right)^{\frac{p-1}{p}}.
	\end{align}
	Note that Nakai's condition gives us
	\begin{align}\label{in lem 2.1-4}
	\int_{|x-y| \geq \delta} \frac{|V(y)|^{p}}{|x-y|^{q}} \, dy
	& = \sum_{k=0}^{\infty} \int_{2^{k} \delta \leq |x-y| < 2^{k+1} \delta} \frac{|V(y)|^{p}}{|x-y|^{q}} \, dy \nonumber \\
	& \leq C \| V \|_{L^{p,\varphi}}^{p}   \int_{\delta}^{\infty} \frac{\varphi(t)}{t^{q+1}} \, dt \nonumber \\
	& \leq C \| V \|_{L^{p,\varphi}}^{p} \delta^{n-p \alpha -q}.
	\end{align}
	Since $ n + (\frac{q}{p} + 1-n)(\frac{p}{p-1}) < 0$, we obtain
	\begin{align}\label{in lem 2.1-5}
	\int_{|x-y| \geq \delta} |x-y|^{ (\frac{q}{p} + 1-n)(\frac{p}{p-1}) }  dy
	& = C(n,p,\alpha) \delta^{n + (\frac{q}{p} + 1-n)(\frac{p}{p-1})}.
	\end{align}
	Introducing \eqref{in lem 2.1-4} and \eqref{in lem 2.1-5} in \eqref{in lem 2.1-3}, we have
	\begin{align}\label{in lem 2.1-6}
	\int_{|x-y| \geq \delta} \frac{|V(y)|}{|x-y|^{n-1}} \, dy
	& \leq C \| V \|_{L^{p,\varphi}} \left(  \delta^{n-p\alpha -q} \right)^{\frac{1}{p}}
	\left(   \delta^{n + (\frac{q}{p} + 1-n)(\frac{p}{p-1})}  \right)^{\frac{p-1}{p}} \nonumber \\
	& =  C \| V \|_{L^{p,\varphi}} \delta^{1-\alpha}.
	\end{align}
	From \eqref{in lem 2.1-6}, \eqref{in lem 2.1-2} and \eqref{in lem 2.1-1}, we get
	\begin{align}\label{in lem 2.1-7}
	\int_{\mathbb{R}^{n}} \frac{|V(y)|}{|x-y|^{n-1}} \, dy
	\leq C M(V)(x) \delta + C \| V \|_{L^{p,\varphi}} \delta^{1-\alpha}
	\end{align}
	For $ \delta = \| V \|_{L^{p,\varphi}}^{\frac{1}{\alpha}} [M(V)(x)]^{-\frac{1}{\alpha}}$,
	the inequality \eqref{in lem 2.1-7} becomes
	\begin{align}
	\int_{\mathbb{R}^{n}} \frac{|V(y)|}{|x-y|^{n-1}} \, dy
	\leq C [M(V)(x)]^{1-\frac{1}{\alpha}} \| V \|_{L^{p,\varphi}}^{\frac{1}{\alpha}}
	= C [M(V)(x)]^{\frac{\alpha-1}{\alpha}} \| V \|_{L^{p,\varphi}}^{\frac{1}{\alpha}}. \nonumber
	\end{align}
	Thus, the lemma is proved.
\end{proof}

Now, we are ready to prove  Fefferman's inequality in generalized Morrey spaces.

\begin{proof}[\textbf{Proof of Theorem \ref{th 2.1}}]
	Let $ 1 < \gamma <p $ and $ w :=[M(|V|^{\gamma})]^{\frac{1}{\gamma}}$. Then $ w  \in A_{1} \cap L^{p,\varphi} $
	according to Lemma \ref{lem 2.1}.
	First, we will show that \eqref{feff-in Morrey} holds for $ w $ in place of $V$.
	For any $ u \in C^{\infty}_{0}(\mathbb{R}^{n})$, let $ B $ be a ball such that $ u \in C^{\infty}_{0}(B)$.
	From the well-known inequality
	\begin{align}\label{th 2.2-2.1}
	|u(x)|\le C \int_{B_0}\frac{|\nabla u(y)|}{|x-y|^{n-1}} \ dy,
	\end{align}
	Tonelli's theorem, and Lemma \ref{lem 2.2}, we have
	\begin{align}\label{in th 2.1-14}
	\int_{\mathbb{R}^{n}} |u(x)|^{\alpha} w(x) dx
	& = \int_{B} |u(x)|^{\alpha} w(x) dx \nonumber \\
	& \leq C \int_{B} \left( \int_{B} \frac{|u(y)|^{\alpha-1} |\nabla u(y)| }{|x-y|^{n-1}} dy \right) |w(x)|dx \nonumber \\
	& \leq C \| w \|_{L^{p,\varphi}}^{\frac{1}{\alpha}} \int_{B} |u(x)|^{\alpha-1} |\nabla u(x)|
	[M(w)(x)]^{\frac{\alpha-1}{\alpha}}  dx.
	\end{align}
	H\"{o}lder's inequality and Lemma \eqref{lem 2.1} imply that
	\begin{align}\label{in th 2.1-15}
	\int_{B} |u(x)|^{\alpha-1} |\nabla u(x)|  [M(w)(x)]^{\frac{\alpha-1}{\alpha}} \, dx
	& \leq \left( \int_{B} |\nabla u(x)|^{\alpha} \, dx \right)^{\frac{1}{\alpha}} \nonumber \\
	& \qquad \times  \left( \int_{B} |u(x)|^{\alpha} M(w)(x) \, dx \right)^{\frac{\alpha-1}{\alpha}} \nonumber \\
	& \leq C \left( \int_{B} |\nabla u(x)|^{\alpha} \, dx \right)^{\frac{1}{\alpha}} \nonumber \\
	& \qquad \times \left( \int_{B} |u(x)|^{\alpha} w(x) \, dx \right)^{\frac{\alpha-1}{\alpha}}.
	\end{align}
	Substituting \eqref{in th 2.1-15} into \eqref{in th 2.1-14}, we obtain
	\begin{align*}
	\int_{\mathbb{R}^{n}} |u(x)|^{\alpha} |w(x)| dx
	& \leq C \| w \|_{L^{p,\varphi}}^{\frac{1}{\alpha}} \left( \int_{B} |\nabla u(x)|^{\alpha} \, dx
	\right)^{\frac{1}{\alpha}} \left( \int_{B} |u(x)|^{\alpha} w(x) \, dx \right)^{\frac{\alpha-1}{\alpha}}.
	\end{align*}
	Therefore
	\begin{align*}
	\int_{\mathbb{R}^{n}} |u(x)|^{\alpha} w(x) dx
	& \leq C \| w \|_{L^{p,\varphi}} \int_{B} |\nabla u(x)|^{\alpha} \, dx.
	\end{align*}
	By (\ref{f-Mf}), we have $|V(x)|=[|V(x)|^\gamma]^{\frac{1}{\gamma}}\le [M(|V(x)|^\gamma)]^{\frac{1}{\gamma}}=w(x)$.
	Hence, from the boundedness of the Hardy-Littlewood maximal operator on generalized Morrey spaces and Lemma \ref{lem 2.1}, we conclude that
	\begin{align*}
	\int_{\mathbb{R}^{n}} |u(x)|^{\alpha} |V(x)| dx
	& \leq \int_{\mathbb{R}^{n}} |u(x)|^{\alpha} w(x) dx \\
	& \leq C \| w \|_{L^{p,\varphi}} \int_{B} |\nabla u(x)|^{\alpha} \, dx \\
	& \leq C \| V \|_{L^{p,\varphi}} \int_{\mathbb{R}^{n}} |\nabla u(x)|^{\alpha} \, dx.
	\end{align*}
	This completes the proof.
\end{proof}

We have already  shown in Theorem \ref{th 2.1} that  Fefferman's inequality holds in generalized Morrey spaces
under certain conditions.

\subsection{Fefferman's Inequality in Stummel Classes}

We need the following lemma to prove Fefferman's inequality where its potentials belong to Stummel classes.
For the case $ \alpha = 2 $, this lemma  can also be deduced from property of the Riez kernel which is stated in \cite[p. 45]{L}.

\begin{lemma}\label{lem 2.3}
	Let $1 <\alpha \leq 2$ and $ \alpha < n$. For any ball $ B_{0} \subset \mathbb{R}^{n}$,
	the following inequality holds:
	\begin{equation*}
	\int\limits_{B_0}
	\frac{1}{|x-y|^{\frac{n-1}{\alpha-1}} |z-y|^{n-1}} \ dy
	\leq \frac{C}{|x-z|^{\frac{n-1}{\alpha-1}-1}}, \quad x,z \in B_{0}, \quad x \neq z.
	\end{equation*}
\end{lemma}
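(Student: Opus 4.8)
Writing $a := \tfrac{n-1}{\alpha-1}$ and fixing distinct $x,z \in B_0$ with $d := |x-z|$, the plan is to split the domain of integration into three pieces governed by the two singular points: the inner ball $R_1 := \{y \in B_0 : |y-x| < d/2\}$ around $x$, the inner ball $R_2 := \{y \in B_0 : |y-z| < d/2\}$ around $z$, and the far region $R_3 := B_0 \setminus (R_1 \cup R_2)$. On $R_1$ and $R_2$ one of the two distance factors is comparable to $d$ and can be pulled outside the integral, and on $R_3$ the two factors are comparable to each other; in each case the remaining integral is an elementary radial one. I expect all three pieces to contribute a term of size $C\,d^{1-a}$, matching the target $C\,|x-z|^{-(a-1)}$. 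The whole argument rests on two numerical coincidences built into the exponents: the exponent on $|z-y|$ is the borderline value $n-1$, so that $n-(n-1)=1$; and $a+(n-1) = (n-1)\tfrac{\alpha}{\alpha-1}$, which exceeds $n$ exactly when $\alpha < n$.

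On $R_1$ I would use $|z-y| \ge |x-z| - |x-y| > d/2$ to estimate $|z-y|^{-(n-1)} \le (d/2)^{-(n-1)}$, leaving $\int_{|y-x|<d/2} |x-y|^{-a}\,dy = C\,(d/2)^{n-a}$ (a finite radial integral as long as $a < n$); multiplying gives $C\,(d/2)^{(n-a)-(n-1)} = C\,d^{1-a}$. Symmetrically, on $R_2$ the bound $|x-y| > d/2$ extracts $(d/2)^{-a}$, while $\int_{|y-z|<d/2} |z-y|^{-(n-1)}\,dy = C\,d$ because the exponent $n-1$ is integrable at the centre, again producing $C\,d^{1-a}$.

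The main obstacle is the far region $R_3$, and this is where the hypothesis $\alpha < n$ does the essential work. First I would show that $|z-y| \ge \tfrac{1}{4}|x-y|$ for every $y \in R_3$, treating the cases $|x-y| \ge 2d$ and $d/2 \le |x-y| < 2d$ separately, so that the integrand is dominated by $C\,|x-y|^{-(a+n-1)}$. Passing to polar coordinates about $x$ then reduces $R_3$ to $\int_{d/2}^{\operatorname{diam}(B_0)} \rho^{-a}\,d\rho$. Since $a>1$ is equivalent to $\alpha < n$, this integral is controlled by its value at the \emph{inner} endpoint $\rho = d/2$, giving $C\,(d/2)^{1-a}$ with a constant depending only on $n$ and $\alpha$ and \emph{not} on the radius of $B_0$; were $a \le 1$ the outer endpoint would dominate and the constant would deteriorate. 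Summing the three contributions yields $\int_{B_0} \frac{dy}{|x-y|^a|z-y|^{n-1}} \le C\,d^{1-a} = C\,|x-z|^{-(a-1)}$, which is the assertion. Finally, I would record that the whole estimate presupposes $a = \tfrac{n-1}{\alpha-1} < n$, i.e.\ $\alpha > 2 - \tfrac{1}{n}$, for otherwise the $R_1$ integral already diverges at $y=x$; so I would want to make sure the lemma is only invoked in that subrange of $1 < \alpha \le 2$.
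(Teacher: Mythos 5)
Your proof is correct and follows essentially the same route as the paper's: both decompose $B_0$ at the scale $|x-z|$ into a region near $x$, a region near $z$, and a far region on which $|z-y|$ and $|x-y|$ are comparable, pulling the non-singular factor out of the two inner integrals. The only cosmetic difference is that the paper handles the far region by summing over dyadic annuli $2^{j}r\le |x-y|<2^{j+1}r$ where you integrate $\rho^{-a}$ directly in polar coordinates, and it absorbs your $R_2$ into the shell $r\le|x-y|<4r$. Your closing caveat is a genuine catch rather than a defect of your argument: convergence of the $R_1$ piece requires $a=\frac{n-1}{\alpha-1}<n$, i.e.\ $\alpha>2-\frac{1}{n}$, which is not implied by the stated hypotheses $1<\alpha\le 2$, $\alpha<n$; the paper's own bound for $I_2$ uses the same convergence silently in the term $\int_{|x-y|<r}|x-y|^{-\frac{n-1}{\alpha-1}}\,dy$, and for $1<\alpha\le 2-\frac{1}{n}$ the left-hand side of the lemma is in fact $+\infty$, so the lemma needs that additional restriction.
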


\begin{proof}
	Let $r := \frac{1}{2}|x-z|$. Then
	\begin{align}\label{lem 2.3-2.9}
	\int_{B_0}
	\frac{1}{|x-y|^{\frac{n-1}{\alpha-1}} |z-y|^{n-1}} \ dy
	& \leq \sum_{j=2}^{\infty} \int_{2^{j}r \leq |x-y|<2^{j+1}r}
	\frac{1}{|x-y|^{\frac{n-1}{\alpha-1}} |z-y|^{n-1}} \ dy \nonumber  \\
	&\qquad + \int_{|x-y|<4r}
	\frac{1}{|x-y|^{\frac{n-1}{\alpha-1}} |z-y|^{n-1}} \ dy  \nonumber \\
	& = I_{1} + I_{2}.
	\end{align}
	For $ I_{1}$, we get
	\begin{align}\label{lem 2.3-2.10}
	I_{1} & = \sum_{j=2}^{\infty} \int_{2^{j}r \leq |x-y|<2^{j+1}r}
	\frac{1}{|x-y|^{\frac{n-1}{\alpha-1}} |z-y|^{n-1}} \ dy \nonumber \\
	& \leq \sum_{j=2}^{\infty} \frac{1}{(2^{j}r)^{\frac{n-1}{\alpha-1}}} \int_{2^{j}r \leq |x-y|<2^{j+1}r}
	\frac{1}{|z-y|^{n-1}} \ dy.
	\end{align}
	Note that, $ 2^{j}r \leq |x-y|<2^{j+1}r $ implies $ 2^{j}r \leq |x-y| < 2r + |z-y|$.
	Therefore $ 2^{j-1}r \leq 2^{j}r-2r \leq |z-y| $. Hence the inequality \eqref{lem 2.3-2.10} becomes
	\begin{align}\label{lem 2.3-2.11}
	I_{1} & \leq \sum_{j=2}^{\infty} \frac{1}{(2^{j}r)^{\frac{n-1}{\alpha-1}}} \int_{2^{j}r \leq |x-y|<2^{j+1}r}
	\frac{1}{|z-y|^{n-1}} \ dy \nonumber \\
	& \leq C(n,\alpha) \sum_{j=2}^{\infty} \frac{1}{(2^{j}r)^{\frac{n-1}{\alpha-1}}} \frac{1}{(2^{j}r)^{n-1}}
	\int_{2^{j}r \leq |x-y|<2^{j+1}r} 1 \ dy \nonumber \\
	& \leq C(n,\alpha) \frac{1}{(r)^{\frac{n-1}{\alpha-1}-1}} \sum_{j=2}^{\infty} \frac{1}{(2^{j})^{\frac{n-1}{\alpha-1}-1}}.
	\end{align}
	Since $ \frac{n-1}{\alpha-1}-1>0$, the last series in \eqref{lem 2.3-2.11} is convergent. This gives us
	\begin{equation}\label{lem 2.3-2.12}
	I_{1} \leq C(n,\alpha) \frac{1}{(r)^{\frac{n-1}{\alpha-1}-1}}
	= \frac{C(n,\alpha)}{|x-z|^{\frac{n-1}{\alpha-1}-1}}.
	\end{equation}
	For $I_{2}$, we obtain
	\begin{align}\label{lem 2.3-2.13}
	I_{2}
	& = \int_{|x-y|<r}
	\frac{1}{|x-y|^{\frac{n-1}{\alpha-1}} |z-y|^{n-1}} \ dy
	+ \int_{r\leq|x-y|<4r}
	\frac{1}{|x-y|^{\frac{n-1}{\alpha-1}} |z-y|^{n-1}} \ dy \nonumber \\
	& \leq C(n,\alpha) \frac{1}{r^{\frac{n-1}{\alpha-1}-1}} =  \frac{C(n,\alpha)}{|x-z|^{\frac{n-1}{\alpha-1}-1}}.
	\end{align}
	Combining \eqref{lem 2.3-2.9}, \eqref{lem 2.3-2.12}, and \eqref{lem 2.3-2.13}, the lemma is proved.
\end{proof}

The following theorem is Fefferman's inequality where the potential belongs to a Stummel class.

\begin{proof}[\textbf{Proof of  Theorem \ref{th 2.2}}]
	The proof is separated into two cases, namely $ \alpha=1 $ and $ 1 < \alpha \leq 2$.
	We first consider the case $ \alpha=1$. Using the inequality \eqref{th 2.2-2.1}
	together with Fubini's theorem, we get
	\begin{align*}
	\int_{B_0} |u(x)| |V(x)|^p \ dx
	&\le C \int_{B_0} |\nabla u(y)|
	\int_{B_0} \frac{|V(x)|^p}{|x-y|^{n-1}} \ dx dy
	\\
	&\le
	C \int_{B_0} |\nabla u(y)|
	\int_{B(y, 2r_0)} \frac{|V(x)|^p}{|x-y|^{n-1}} \ dx dy.
	\end{align*}
	It follows from the last inequality and the doubling property of Stummel $p$-modulus of $V$ that
	\[
	\int_{B_0} |u(x)| |V(x)|^p \ dx
	\le
	C \,\eta_{\alpha,p}V(r_0)
	\int_{B_0} |\nabla u(x)| \ dx,
	\]
	as desired.
	
	We now consider the case $1<\alpha\le 2$. Using the inequality \eqref{th 2.2-2.1} and H\"older's inequality, we have
	\begin{align}\label{th 2.2-2.2}
	\int_{B_0} |u(x)|^\alpha |V(x)|^p \ dx
	&\le
	C \int_{B_0} |\nabla u(y)| \int_{B_0}
	\frac{|u(x)|^{\alpha-1}|V(x)|^p }{|x-y|^{n-1}} \  dx \ dy
	\nonumber \\
	&\le
	C \left(\int_{B_0} |\nabla u(y)|^\alpha \right)^{\frac{1}{\alpha}}
	\left(\int_{B_0} F(y)^{\frac{\alpha}{\alpha-1}} \ dy \right)^{\frac{\alpha-1}{\alpha}},
	\end{align}
	where $\displaystyle F(y):=\int_{B_0} \frac{|u(x)|^{\alpha-1}|V(x)|^p}{|x-y|^{n-1}} \ dx$, $\ y\in B_0$.
	Applying H\"older's inequality again, we have
	\begin{equation*}
	F(y)
	\le
	\left(
	\int_{B_0} \frac{|V(x)|^p}{|x-y|^{n-1}} \ dx
	\right)^{\frac{1}{\alpha}}
	\left(
	\int_{B_0} \frac{|u(z)|^\alpha|V(z)|^p}{|z-y|^{n-1}} \ dz
	\right)^{\frac{\alpha-1}{\alpha}},
	\end{equation*}
	so that
	\begin{align}\label{th 2.2-2.3}
	\int_{B_0} F(y)^{\frac{\alpha}{\alpha-1}} \ dy
	&\le
	\int_{B_0}
	\left( \int_{B_0} \frac{|V(x)|^{p}}{|x-y|^{n-1}}
	\ dx \right)^{\frac{1}{\alpha-1}}
	\int_{B_0} \frac{|u(z)|^\alpha|V(z)|^p}{|z-y|^{n-1}} \ dz \ dy \nonumber \\
	&=
	\int_{B_0}
	|u(z)|^\alpha |V(z)|^p G(z) \ dz,
	\end{align}
	where $\displaystyle G(z):=\int_{B_0} \left( \int_{B_0}
	\frac{|V(x)|^p}{|x-y|^{n-1} |z-y|^{(n-1)(\alpha-1)}} \ dx\right)^{\frac{1}{\alpha-1}} \ dy$,
	$z\in B_0$.
	By virtue of Minkowski's integral inequality (or Fubini's theorem for $\alpha=2$), we see that
	\begin{align}\label{th 2.2-2.4}
	G(z)^{\alpha-1}
	\le
	\int_{B_0} |V(x)|^p
	\left(
	\int_{B_0} \frac{1}{|x-y|^{\frac{n-1}{\alpha-1}} |z-y|^{n-1}} \ dy
	\right)^{\alpha-1} \ dx.
	\end{align}
	Combining \eqref{th 2.2-2.4}, doubling property of Stummel $p$-modulus of $V$, and the
	inequality in Lemma \ref{lem 2.1}, we obtain
	\begin{align}\label{th 2.2-2.5}
	G(z)
	\le C
	\left(\int_{B_0} \frac{|V(x)|^p}{|x-z|^{n-\alpha}} \ dx\right)^{\frac{1}{^{\alpha-1}}}
	\le C[\eta_{\alpha,p}V(r_0)]^{\frac{p}{\alpha-1}}.
	\end{align}
	Now, \eqref{th 2.2-2.3} and \eqref{th 2.2-2.5} give
	\begin{align}\label{th 2.2-2.6}
	\int_{B_0} |F(y)|^{\frac{\alpha}{\alpha-1}} \ dy
	\le C[\eta_{\alpha,p}V(r_0)]^{\frac{p}{{\alpha-1}}}
	\int_{B_0} |u(x)|^\alpha |V(x)|^p \ dx.
	\end{align}
	Therefore, from \eqref{th 2.2-2.2} and \eqref{th 2.2-2.6}, we get
	\begin{align}\label{th 2.2-2.7}
	\int_{B_0}& |u(x)|^\alpha  |V(x)|^p \ dx
	\nonumber	\\
	&\le C[\eta_{\alpha,p}V(r_0)]^{\frac{p}{\alpha}}
	\left(
	\int_{B_0} |\nabla u(x)|^\alpha \ dx
	\right)^{\frac{1}{\alpha}}
	\left(
	\int_{B_0}
	|u(x)|^\alpha |V(x)|^p \ dx
	\right)^{\frac{\alpha-1}{\alpha}}.
	\end{align}
	Dividing both sides by the third term of the right-hand side of \eqref{th 2.2-2.7}, we get the desired inequality.
\end{proof}

Let $ B $ be an open ball in $ \mathbb{R}^{n}$. If $u$ has weak gradient $\nabla u$ in $B$ and
$u$ is integrable over $B$, then we have the sub-representation inequality
\begin{equation}\label{sub-rep}
|u(x)-u_{B}| \leq C(n) \int\limits_{B} \frac{|\nabla u(y)|}{|x-y|^{n-1}} dy, \quad x \in B,
\end{equation}
where ${\displaystyle u_{B} := \frac{1}{|B|} \int_{B} u(y)dy}$.
Using the inequality \eqref{sub-rep} and the method in the proof of the previous theorem,
we obtain the following result.

\begin{theorem}
	Let $1 \leq p < \infty$, $1 \leq \alpha \le 2$, and $ \alpha < n$.
	Suppose that $u$ has weak gradient $\nabla u$ in $B_0:=B(x_{0},r_{0}) \subseteq \R^{n}$ and
	that $u$ is integrable over $B_0$.
	If $V \in \tilde{S}_{\alpha,p}$, then
	\begin{equation*}
	\int\limits_{B_0} |u(x)-u_{ B(x_{0},r_{0})}|^{\alpha} |V(x)|^{p} dx
	\leq
	C\left[\eta_{\alpha,p}V(r_{0})\right]^{p} \int\limits_{B_0} |\nabla u(x)|^{\alpha} dx,
	\end{equation*}
	where $C := C(n,\alpha)$.
\end{theorem}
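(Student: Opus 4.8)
The plan is to reduce the statement to Theorem~\ref{th 2.2} by recentering. Set $v := u - u_{B_0}$, where $u_{B_0} = |B_0|^{-1}\int_{B_0} u$. Since $u_{B_0}$ is a constant, $v$ has weak gradient $\nabla v = \nabla u$ on $B_0$, and the sub-representation inequality \eqref{sub-rep} reads
\begin{equation*}
|v(x)| \le C(n)\int_{B_0}\frac{|\nabla v(y)|}{|x-y|^{n-1}}\,dy,\qquad x\in B_0.
\end{equation*}
This is precisely the pointwise bound \eqref{th 2.2-2.1} that drives the proof of Theorem~\ref{th 2.2}. Every subsequent step of that proof (the splitting into $\alpha=1$ and $1<\alpha\le 2$, the applications of Tonelli/Fubini and H\"older, the auxiliary functions $F$ and $G$, Minkowski's integral inequality, Lemma~\ref{lem 2.3}, and the doubling property of $\eta_{\alpha,p}V$) uses $u$ only through this pointwise inequality while integrating over the fixed ball $B_0$. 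I would therefore rerun that argument verbatim with $v$ in place of $u$; because $\nabla v=\nabla u$, the right-hand side comes out as $C[\eta_{\alpha,p}V(r_0)]^p\int_{B_0}|\nabla u|^\alpha$, exactly as claimed.

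First I would dispose of the case $\alpha=1$, which is the cleanest: applying \eqref{sub-rep} and Tonelli's theorem gives
\begin{equation*}
\int_{B_0}|v(x)||V(x)|^p\,dx \le C\int_{B_0}|\nabla u(y)|\int_{B(y,2r_0)}\frac{|V(x)|^p}{|x-y|^{n-1}}\,dx\,dy,
\end{equation*}
and the inner integral is at most $[\eta_{\alpha,p}V(2r_0)]^p \le C[\eta_{\alpha,p}V(r_0)]^p$ by the doubling property. As all integrands are nonnegative, no integrability hypothesis is needed here.

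For $1<\alpha\le2$ the same chain of estimates produces the self-improving inequality
\begin{equation*}
A \le C\left([\eta_{\alpha,p}V(r_0)]^p\int_{B_0}|\nabla u|^\alpha\right)^{1/\alpha} A^{(\alpha-1)/\alpha},\qquad A:=\int_{B_0}|v|^\alpha|V|^p,
\end{equation*}
and one concludes by dividing through by $A^{(\alpha-1)/\alpha}$. The main obstacle is that, unlike in Theorem~\ref{th 2.2} where $u\in C^\infty_0(B_0)$ is bounded, here $v$ is merely integrable, so the finiteness of $A$ needed to divide is not automatic. I would handle this by a truncation argument: assuming (without loss, since otherwise the claim is trivial) that $\int_{B_0}|\nabla u|^\alpha<\infty$, Young's inequality applied to \eqref{sub-rep}---the kernel $|y|^{-(n-1)}\mathbf{1}_{\{|y|<2r_0\}}$ lies in $L^1$---yields $v\in L^\alpha(B_0)$. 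Then for the bounded potential $V_N:=V\,\mathbf{1}_{\{|V|\le N\}}$ one has $\eta_{\alpha,p}V_N\le \eta_{\alpha,p}V$ and $A_N:=\int_{B_0}|v|^\alpha|V_N|^p\le N^p\int_{B_0}|v|^\alpha<\infty$, so the division is legitimate and gives $A_N\le C[\eta_{\alpha,p}V(r_0)]^p\int_{B_0}|\nabla u|^\alpha$ uniformly in $N$. Letting $N\to\infty$, monotone convergence delivers both the finiteness of $A$ and the desired bound.
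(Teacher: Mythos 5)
Your proposal takes essentially the same route as the paper, which proves this theorem in one line by substituting the sub-representation inequality \eqref{sub-rep} for \eqref{th 2.2-2.1} and rerunning the proof of Theorem \ref{th 2.2} with $u-u_{B_0}$ in place of $u$. Your extra truncation argument (showing $v\in L^{\alpha}(B_0)$ via Young's inequality and passing to $V_N$ so that the division by $A_N^{(\alpha-1)/\alpha}$ is legitimate) correctly fills a finiteness point that the paper leaves implicit.
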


\begin{remark}
	Note that the case $\alpha=2$ is exactly the Corollary 4.4 in \cite{CRR}.
\end{remark}

\section{Applications in Elliptic Partial Differential Equations}

The two lemmas below tell us that if a function vanishes of infinity order at some $ x_{0} \in \Omega $ and
fulfills the doubling integrability over some neighborhood of $ x_{0} $, then the function must be identically to zero in the neighborhood.

\begin{lemma}[\cite{GIA}]\label{lem 1.2}
	Let $ w \in L^{1}_{\rm loc}(\Omega) $ and $ B(x_{0},r) \subseteq \Omega$.
	Assume that there exists a constant $C>0$ satisfying
	\begin{equation*}
	\int\limits_{B(x_{0},r)} w(x)dx \leq C \int\limits_{B\left( x_{0},\frac{r}{2} \right) } w(x)dx.
	\end{equation*}
	If $w$ vanishes of infinity order at $x_{0}$, then $ w \equiv 0 $ in $ B(x_{0},r)$.
\end{lemma}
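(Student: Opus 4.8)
The plan is to reduce everything to the behaviour of the single quantity $\phi(\rho) := \int_{B(x_0,\rho)} w\,dx$ as $\rho \to 0$, and to play the geometric growth produced by the doubling hypothesis against the faster-than-polynomial decay forced by infinite-order vanishing. Since $w \geq 0$, the conclusion $w \equiv 0$ on $B(x_0,r)$ is equivalent to $\phi(r)=0$, so it suffices to rule out $\phi(r)>0$.

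First I would read the doubling hypothesis in its natural scale-invariant form, namely $\phi(\rho) \leq C\,\phi(\rho/2)$ for every $0<\rho\leq r$ (this is how the stated inequality is to be used, applied to each concentric ball). Applying it along the dyadic sequence $\rho_m := r/2^m$ and iterating yields $\phi(r) \leq C^m \phi(r/2^m)$ for every $m \in \mathbb{N}$, equivalently $\phi(r/2^m) \geq C^{-m}\phi(r)$.

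Next I would bring in the measures of the balls: $|B(x_0,r/2^m)| = c_n (r/2^m)^n$ with $c_n := |B(0,1)|$, so that for any exponent $k>0$,
\[
\frac{1}{|B(x_0,r/2^m)|^k} \int_{B(x_0,r/2^m)} w\,dx
= \frac{\phi(r/2^m)}{c_n^k (r/2^m)^{nk}}
\geq \frac{\phi(r)}{c_n^k r^{nk}} \left( \frac{2^{nk}}{C} \right)^m.
\]
The decisive step is the choice of $k$: take $k$ so large that $2^{nk} > C$, making the factor $(2^{nk}/C)^m$ tend to $+\infty$ as $m \to \infty$. If $\phi(r)$ were strictly positive, the left-hand side would blow up along $\rho_m \to 0$, contradicting the infinite-order vanishing of $w$ at $x_0$, which forces exactly this quantity to tend to $0$ for the chosen $k$.

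Hence $\phi(r)=0$, and since $w \geq 0$ this gives $w=0$ almost everywhere on $B(x_0,r)$. I expect the only genuine subtlety to be the quantifier in the hypothesis, that is, making explicit that doubling is invoked at every dyadic scale, together with the matching of constants: the super-polynomial decay imposed by infinite-order vanishing beats the geometric growth $C^m$ precisely when $k$ is large enough that $2^{nk}$ dominates $C$. The remaining bookkeeping is a routine iteration.
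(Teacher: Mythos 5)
Your proof is correct and matches the paper's treatment: the paper itself only cites Giaquinta for this lemma, but your iteration of the doubling inequality along dyadic scales against the super-polynomial decay from infinite-order vanishing is exactly the argument the paper gives for the companion Lemma \ref{lem 1.3}, specialized to $\beta=1$ (where the extra H\"older step is unnecessary). You also correctly flag the one real subtlety, namely that the hypothesis must be read as holding on every concentric ball $B(x_0,r/2^m)$, which is how the paper uses it as well.
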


\begin{lemma}\label{lem 1.3}
	Let $ w \in L^{1}_{\rm loc}(\Omega) $ and $ B(x_{0},r) \subseteq \Omega$,
	and $ 0< \beta <1 $. Assume that there exists a constant $C>0$ satisfying
	\begin{equation*}
	\int\limits_{B(x_{0},r)} w^{\beta}(x)dx \leq C \int\limits_{B\left( x_{0},\frac{r}{2}\right) } w^{\beta}(x)dx.
	\end{equation*}
	If $w$ vanishes of infinity order at $x_{0}$, then $ w \equiv 0 $ in $ B(x_{0},r)$.
\end{lemma}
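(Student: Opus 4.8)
The plan is to deduce this lemma from Lemma \ref{lem 1.2} by applying that result to the function $w^{\beta}$ rather than to $w$ itself. The doubling hypothesis of Lemma \ref{lem 1.2} for $w^{\beta}$ is precisely the hypothesis we are given here, so the only genuine work is to verify that $w^{\beta}$ still vanishes of infinite order at $x_{0}$; once this is known, Lemma \ref{lem 1.2} yields $w^{\beta}\equiv 0$, and hence $w\equiv 0$, on $B(x_{0},r)$.

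First I would check that $w^{\beta}$ is an admissible input. Since $0<\beta<1$ and $w\ge 0$ with $w\in L^{1}_{\rm loc}(\Omega)$, H\"older's inequality with exponents $\frac{1}{\beta}$ and $\frac{1}{1-\beta}$ gives, for every ball $B\subseteq\Omega$,
\begin{equation*}
\int_{B} w^{\beta}(x)\,dx \le |B|^{1-\beta}\left(\int_{B} w(x)\,dx\right)^{\beta},
\end{equation*}
so that $w^{\beta}\in L^{1}_{\rm loc}(\Omega)$ and $w^{\beta}\ge 0$, as required by the definition of vanishing of infinite order.

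The heart of the argument is to propagate the infinite-order vanishing through this inequality. Dividing the displayed estimate (with $B=B(x_{0},r)$) by $|B(x_{0},r)|^{k}$, I obtain
\begin{equation*}
\frac{1}{|B(x_{0},r)|^{k}}\int_{B(x_{0},r)} w^{\beta}(x)\,dx \le |B(x_{0},r)|^{1-\beta-k}\left(\int_{B(x_{0},r)} w(x)\,dx\right)^{\beta}.
\end{equation*}
The assumption that $w$ vanishes of infinite order means that for every $m>0$ one has $\int_{B(x_{0},r)} w\,dx=o\big(|B(x_{0},r)|^{m}\big)$ as $r\to 0$. Raising this to the power $\beta$ and inserting it above, the right-hand side is $o\big(|B(x_{0},r)|^{(1-\beta-k)+m\beta}\big)$; choosing $m>\frac{k+\beta-1}{\beta}$ makes the exponent positive, so the whole expression tends to $0$ as $r\to 0$. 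Since $k>0$ was arbitrary, $w^{\beta}$ vanishes of infinite order at $x_{0}$.

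I expect the exponent bookkeeping in this last step to be the only delicate point: the H\"older (equivalently, Jensen) step introduces the negative power $1-\beta-k$, and the argument works precisely because the order $m$ in the definition of infinite-order vanishing is unrestricted and can be taken large enough to absorb it. Having established that $w^{\beta}$ vanishes of infinite order and satisfies the doubling condition, an application of Lemma \ref{lem 1.2} to $w^{\beta}$ finishes the proof.
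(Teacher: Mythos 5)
Your proof is correct, but it takes a genuinely different (and more modular) route than the paper. The paper does not invoke Lemma \ref{lem 1.2} at all: it iterates the doubling hypothesis for $w^{\beta}$ down the dyadic balls $B(x_{0},2^{-j}r)$, applies H\"older's inequality on the smallest ball to pass from $\int w^{\beta}$ to $\int w$, chooses $k>0$ with $C^{1/\beta}2^{-k}=1$, and lets $j\to\infty$, using the infinite-order vanishing of $w$ only in that final limit. You instead isolate the clean auxiliary fact that infinite-order vanishing of $w$ is inherited by $w^{\beta}$ --- via the same H\"older/Jensen estimate $\int_{B}w^{\beta}\le |B|^{1-\beta}\bigl(\int_{B}w\bigr)^{\beta}$, applied once at each scale rather than at the end of an iteration --- and then quote Lemma \ref{lem 1.2} for $w^{\beta}$ as a black box; your exponent bookkeeping ($1-\beta-k+m\beta>0$ for $m$ large, which is exactly where the unrestricted order $m$ is needed) is right. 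The two arguments thus rest on the same two ingredients, but yours packages the dyadic iteration inside Lemma \ref{lem 1.2}, making the proof shorter and the reduction reusable, while the paper's self-contained version keeps the explicit choice of $k$ visible. One shared caveat, not a defect of your argument: both proofs actually use the doubling inequality at every scale $2^{-j}r$, not only at the single radius $r$ as literally written in the hypotheses of Lemmas \ref{lem 1.2} and \ref{lem 1.3}; your reduction inherits whichever reading makes Lemma \ref{lem 1.2} true.
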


\begin{proof}
	According to the hypothesis, for every $ j \in \mathbb{N} $ we have
	\begin{align*}
	\int_{B(x_{0},r)} w^{\beta}(x)dx
	& \leq C^{1} \int_{B(x_{0},2^{-1}r)} w^{\beta}(x)dx \\
	& \leq C^{2} \int_{B(x_{0},2^{-2}r)} w^{\beta}(x)dx \\
	& \quad \vdots \\
	& \leq C^{j} \int_{B(x_{0},2^{-j}r)} w^{\beta}(x)dx.
	\end{align*}
	H\"{o}lder's inequality implies that
	\begin{align}\label{eq lem 1.3-1}
	\left( \int_{B(x_{0},r)} w^{\beta}(x)dx \right)^{\frac{1}{\beta}}
	\leq (C^{\frac{1}{\beta}})^{j} |B(x_{0},2^{-j}r)|^{\frac{1}{\beta}}
	\frac{|B(x_{0},2^{-j}r)|^{k}}{|B(x_{0},2^{-j}r)|^{k+1}} \int_{B(x_{0},2^{-j}r)} w(x)dx.
	\end{align}
	Now we choose $ k>0 $ such that $ C^{\frac{1}{\beta}}2^{-k} = 1$. Then \eqref{eq lem 1.3-1} gives
	\begin{align}\label{eq lem 1.3-2}
	\left( \int_{B(x_{0},r)} w^{\beta}(x)dx \right)^{\frac{1}{\beta}}
	\leq (v_{n}r^{n})^{\frac{1}{\beta}+k} (2^{-\frac{n}{\beta}})^{j} \frac{1}{|B(x_{0},2^{-j}r)|^{k+1}}
	\int_{B(x_{0},2^{-j}r)} w(x)dx,
	\end{align}
	where $ v_{n} $ is the Lebesgue measure of unit ball in $ \mathbb{R}^{n}$.
	Letting $ j \rightarrow \infty$, we obtain from \eqref{eq lem 1.3-2} that $ w^{\beta} \equiv 0 $
	on $ B(x_{0},r)$. Therefore $ w \equiv 0 $ on $ B(x_{0},r)$.
\end{proof}

The following lemma is used by many authors in working with elliptic
partial differential equation (for example, see \cite{CRR,CFG,CG,Z1}). This lemma and  the idea of its proof can be seen in \cite{JM}.
We state and give the proof of this lemma, since it had never been stated and proved formally  to the best of our knowledge.

\begin{lemma}\label{log of weak solution}
	Let $ w : \Omega \rightarrow \mathbb{R} $ and $ B(x,2r) $ be an open ball in $ \Omega $.
	If $ \log(w) \in BMO(B) $ with $ B=B(x,r) $, then there exist $ M>0 $ such that
	\begin{equation*}
	\int_{B(x,2r)} w^{\beta} dy \leq M^{\frac{1}{2}} \int_{B(x,r)} w^{\beta} dy
	\end{equation*}
	for some $ 0 < \beta < 1 $, or
	\begin{equation*}
	\int_{B(x,2r)} w \, dy \leq M^{\frac{1}{2}} \int_{B(x,r)} w \, dy.
	\end{equation*}
\end{lemma}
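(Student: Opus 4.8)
The plan is to derive the conclusion from the John--Nirenberg inequality, which converts the hypothesis $\log(w)\in BMO$ into exponential integrability of $w$. Write $f:=\log(w)$ and let $\|f\|_{*}$ denote its $BMO$ norm on the ball carrying the hypothesis (this ball must contain all of $B(x,2r)$, so that John--Nirenberg is available on $B(x,2r)$ and not merely on $B(x,r)$). John--Nirenberg provides dimensional constants $c_{1},c_{2}>0$ such that, for every ball $B'$ contained in that ball,
\[
\frac{1}{|B'|}\int_{B'} e^{\frac{c_{2}}{\|f\|_{*}}\,|f(y)-f_{B'}|}\,dy \le c_{1}.
\]
Setting $\beta_{0}:=c_{2}/\|f\|_{*}$, this says that $w^{\beta}$ is integrable with good bounds for every $0<\beta\le\beta_{0}$. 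The alternative in the statement corresponds to whether $\beta_{0}\ge 1$ (so the exponent $\beta=1$ is admissible and one obtains doubling for $w$ itself) or $\beta_{0}<1$ (so one settles for some $0<\beta<1$ and obtains doubling for $w^{\beta}$).

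In either case I would fix the admissible exponent $\beta$, namely $\beta=1$ when $\beta_{0}\ge1$ and, say, $\beta=\tfrac12\beta_{0}$ when $\beta_{0}<1$, and estimate the two integrals separately. On the larger ball, John--Nirenberg gives the upper bound
\[
\int_{B(x,2r)} w^{\beta}\,dy = \int_{B(x,2r)} e^{\beta f}\,dy \le c_{1}\,|B(x,2r)|\,e^{\beta f_{B(x,2r)}},
\]
obtained by factoring $e^{\beta f}=e^{\beta f_{B(x,2r)}}e^{\beta(f-f_{B(x,2r)})}$ and bounding the last factor by $e^{\beta|f-f_{B(x,2r)}|}$. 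On the smaller ball, Jensen's inequality applied to the convex function $t\mapsto e^{\beta t}$ and the average over $B(x,r)$ yields the matching lower bound
\[
\int_{B(x,r)} w^{\beta}\,dy = \int_{B(x,r)} e^{\beta f}\,dy \ge |B(x,r)|\,e^{\beta f_{B(x,r)}}.
\]
Dividing these and using $|B(x,2r)|=2^{n}|B(x,r)|$ reduces the claim to controlling the single quantity $f_{B(x,2r)}-f_{B(x,r)}$.

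The remaining step is to bound that difference of averages. Since $B(x,r)\subseteq B(x,2r)$ and $|B(x,2r)|=2^{n}|B(x,r)|$, I would estimate
\[
\bigl|f_{B(x,2r)}-f_{B(x,r)}\bigr| \le \frac{1}{|B(x,r)|}\int_{B(x,r)}\bigl|f-f_{B(x,2r)}\bigr|\,dy \le \frac{2^{n}}{|B(x,2r)|}\int_{B(x,2r)}\bigl|f-f_{B(x,2r)}\bigr|\,dy \le 2^{n}\|f\|_{*}.
\]
Hence $e^{\beta(f_{B(x,2r)}-f_{B(x,r)})}\le e^{2^{n}\beta\|f\|_{*}}$, a constant depending only on $n$ and the $BMO$ norm of $\log(w)$. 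Combining this with the upper and lower bounds above gives $\int_{B(x,2r)} w^{\beta}\,dy\le M^{\frac{1}{2}}\int_{B(x,r)} w^{\beta}\,dy$ (respectively the version for $w$ when $\beta=1$), where $M^{\frac{1}{2}}:=c_{1}2^{n}e^{2^{n}\beta\|f\|_{*}}$.

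I expect the main obstacle to be bookkeeping around the John--Nirenberg threshold rather than any single estimate: one must ensure the exponential integrability is invoked on the ball where $\log(w)$ is assumed to be $BMO$ (so that it holds on all of $B(x,2r)$), and one must track whether the admissible exponent reaches $1$ in order to select the correct branch of the stated alternative. Once the exponent is fixed, the argument is just the pairing of the John--Nirenberg upper bound on $B(x,2r)$ with the Jensen lower bound on $B(x,r)$, with the log-average difference absorbed into the constant.
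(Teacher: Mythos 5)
Your argument is correct, but it takes a genuinely different route from the paper's. The paper also starts from John--Nirenberg, but only in the form $\int_{B}\exp(\beta|\log w-(\log w)_{B}|)\,dy\le M|B|$ on the \emph{small} ball $B=B(x,r)$; it then pairs $w^{\beta}$ against $w^{-\beta}$ there, combining the $A_2$-type bound $\bigl(\int_{B}w^{\beta}\bigr)\bigl(\int_{B}w^{-\beta}\bigr)\le M^{2}|B|^{2}$ with the Cauchy--Schwarz lower bound $|B|\le\bigl(\int_{B}w^{\beta}\bigr)^{1/2}\bigl(\int_{B}w^{-\beta}\bigr)^{1/2}$, and brings the larger ball in by replacing $\bigl(\int_{B}w^{\beta}\,dy\bigr)^{-1/2}$ with $\bigl(\int_{B(x,2r)}w^{\beta}\,dy\bigr)^{-1/2}$. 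That replacement goes the wrong way: since $\int_{B}w^{\beta}\,dy\le\int_{B(x,2r)}w^{\beta}\,dy$, the negative power reverses the inequality, so the paper's chain does not actually yield the comparison across scales from BMO on $B(x,r)$ alone --- and no argument can, because $w$ is unconstrained on the annulus $B(x,2r)\setminus B(x,r)$. Your version sidesteps this by invoking the exponential integrability on $B(x,2r)$ for the upper bound, Jensen on $B(x,r)$ for the lower bound, and absorbing $|(\log w)_{B(x,2r)}-(\log w)_{B(x,r)}|\le 2^{n}\|\log w\|_{*}$ into the constant; the case split on whether the John--Nirenberg exponent reaches $1$ reproduces the stated alternative. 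The price, which you correctly flag, is the stronger hypothesis $\log w\in BMO(B(x,2r))$ rather than $BMO(B(x,r))$ as literally stated; that is in fact what the lemma needs, and it is available in the application since Theorem \ref{th 3.1} provides the BMO bound on any ball whose double lies in $\Omega$. Your constant is also explicit and depends only on $n$ and $\|\log w\|_{*}$, which is exactly the uniformity in $\delta_{j}$ that the proof of Corollary \ref{cor 2} requires.
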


\begin{proof}
	By John-Nirenberg Theorem, there exist $\beta>0$ and $M>0$ such that
	\begin{equation}\label{eq 3.1}
	\left( \int_{B} \exp(\beta|\log(w)-\log(w)_{B}|) \, dy \right)^{2}  \leq M^{2}|B|^{2}.
	\end{equation}
	Assume that $ \beta <1$. Using \eqref{eq 3.1}, we compute
	\begin{align*}
	&\left(\int_{B} w^{\beta} dy \right) \left(\int_{B} w^{-\beta} dy \right)\\
	& =  \left( \int_{B} \exp(\beta \log(w)) dy\right) \left(  \int_{B} \exp(-\beta \log(w)) dy\right) \nonumber \\
	& = \left( \int_{B} \exp(\beta (\log(w)-\log(w)_{B})) dy \right) \left( \int_{B} \exp(-\beta (\log(w)-\log(w)_{B})) dy
	\right) \nonumber \\
	& \leq \left( \int_{B} \exp(\beta|\log(w)-\log(w)_{B}|) \, dy \right)^{2}  \leq M^{2} |B|^{2}, \nonumber
	\end{align*}
	which gives
	\begin{equation}\label{eq 3.2}
	\left(\int_{B} w^{-\beta} dy \right)^{\frac{1}{2}}
	\leq M |B| \left(\int_{B} w^{\beta} dy \right)^{-\frac{1}{2}}
	\leq  M |B| \left(\int_{B(x,2r)} w^{\beta} dy \right)^{-\frac{1}{2}}.
	\end{equation}
	Applying H\"{o}lder's inequality and \eqref{eq 3.2}, we obtain
	\begin{align}\label{eq 3.3}
	|B|
	& \leq \int_{B} w^{\frac{\beta}{2}} u^{-\frac{\beta}{2}} dy
	\leq \left(\int_{B} w^{\beta} dy \right)^{\frac{1}{2}} \left(\int_{B} w^{-\beta} dy \right)^{\frac{1}{2}} \nonumber \\
	& \leq M |B| \left(\int_{B} w^{\beta} dy \right)^{\frac{1}{2}}  \left(\int_{B(x,2r)} w^{\beta} dy \right)^{-\frac{1}{2}}.
	\end{align}
	From \eqref{eq 3.3}, we get
	\begin{equation*}
	\int_{B(x,2r)} w^{\beta} dy \leq M^{\frac{1}{2}} \int_{B(x,r)} w^{\beta} dy.
	\end{equation*}
	For the case $ \beta \geq 1$, we obtain from the inequality \eqref{eq 3.1} that
	\begin{align*}
	\left( \int_{B} \exp(|\log(w)-\log(w)_{B}|) \, dy \right)^{2}
	& \leq \left( \int_{B} \exp(\beta|\log(w)-\log(w)_{B}|) \, dy \right)^{2} \\
	& \leq M^{2}|B|^{2}.
	\end{align*}
	Processing the last inequality with previously method, we get
	\begin{equation*}
	\int_{B(x,2r)} w \, dy \leq M^{\frac{1}{2}} \int_{B(x,r)} w \, dy.
	\end{equation*}
	The proof is completed.
\end{proof}


Theorems \ref{th 2.1}  and \ref{th 2.2} are crucial in proving Theorem \ref{th 3.1}.
\begin{proof}[\textbf{Proof of Theorem \ref{th 3.1}}]
Given $ \delta > 0 $ and let $ \{ u_{k} \}_{k=1}^{\infty} $ be a sequence in $ C_{\rm 0}^{\infty}(\Omega) $ such that
$\lim_{k \rightarrow \infty} \| u_{k} - u \|_{H^{1}(\Omega)} = 0$. By taking a subsequence, we assume
$ u_{k} + \delta \rightarrow u + \delta $ a.e. on $ \Omega $ (see \cite[p.94]{B} or \cite[p.29]{EG}) and
$ u_{k} + \delta > 0 $ for all $ k \in \mathbb{N} $, since $ u \geq 0 $.

Let $\psi \in C_{0}^{\infty}(B(x,2r))$, $0 \leq \psi \leq 1$, $|\nabla \psi| \leq C_{1}r^{-1}$,
and $\psi := 1$ on $B(x,r)$. For every $ k \in \mathbb{N} $, we have $ \psi^{\alpha+1}/(u_{k}+\delta) \in H^{1}_{0}(\Omega) $.
Using this as a test function in the weak solution definition \eqref{b3-b}, we obtain
\begin{align}\label{eq 3.4}
\int\limits_{\Omega} \left\langle a \nabla u, \nabla (u_{k}+\delta) \right\rangle \frac{\psi^{\alpha+1}}{(u_{k}+\delta)^{2}}
&=(\alpha+1)\int\limits_{\Omega} \left\langle a\nabla u, \nabla \psi \right\rangle \frac{\psi^{\alpha}}{(u_{k}+\delta)} \nonumber \\
&+ \sum_{i= 1}^{n} \int\limits_{\Omega} b_{i} \frac{\partial u}{\partial x_{i}} \frac{\psi^{\alpha+1}}{(u_{k}+\delta)}
+ \int\limits_{\Omega} Vu\frac{\psi^{\alpha+1}}{(u_{k}+\delta)}.
\end{align}
Since ${\rm supp} (\psi) \subseteq B(x,2r)$, the inequality \eqref{eq 3.4} reduces to
\begin{align}\label{eq 3.5}
\int\limits_{B(x,2r)} \left\langle a \nabla u, \nabla (u_{k}+\delta) \right\rangle \frac{\psi^{\alpha+1}}{(u_{k}+\delta)^{2}}
& = (\alpha+1)\int\limits_{B(x,2r)} \left\langle a\nabla u, \nabla \psi \right\rangle \frac{\psi^{\alpha}}{(u_{k}+\delta)} \nonumber \\
& \qquad + \sum_{i= 1}^{n} \int\limits_{B(x,2r)} b_{i} \frac{\partial u}{\partial x_{i}} \frac{\psi^{\alpha+1}}{(u_{k}+\delta)} \nonumber \\
& \qquad + \int\limits_{B(x,2r)} Vu\frac{\psi^{\alpha+1}}{(u_{k}+\delta)}.
\end{align}
We will estimate all three terms on the right hand side of \eqref{eq 3.5}. For the first term,
according to \eqref{b2}, we have
\begin{equation}\label{eq 3.6}
|\left\langle a\nabla u, \nabla \psi \right\rangle | \leq \lambda^{-1}|\nabla u||\nabla \psi|.
\end{equation}
Combining  Young's inequality $ab \leq \epsilon a^{2} + \frac{1}{4\epsilon}b^{2} $ for every $\epsilon >0$ ($a,b>0$)
and the inequality \eqref{eq 3.6}, we have for every $\epsilon >0$
\begin{align}\label{eq 3.7}
(\alpha+1)\int\limits_{B(x,2r)} \left\langle a\nabla u, \nabla \psi \right\rangle \frac{\psi^{\alpha}}{(u_{k}+\delta)}
& \leq \epsilon \lambda^{-1}(\alpha+1) \int\limits_{B(x,2r)} \frac{|\nabla u|^{2}}{(u_{k}+\delta)^{2}} \psi^{2\alpha} \nonumber \\
& \qquad + \frac{\lambda^{-1}(\alpha+1)}{4\epsilon} \int\limits_{B(x,2r)} |\nabla \psi|^{2} \nonumber \\
& \leq \epsilon \lambda^{-1}(\alpha+1) \int\limits_{B(x,2r)} \frac{|\nabla (u+\delta)|^{2}}{(u_{k}+\delta)^{2}} \psi^{\alpha+1} \nonumber \\
& \qquad + \frac{\lambda^{-1}(\alpha+1)}{4\epsilon} \int\limits_{B(x,2r)} |\nabla \psi|^{2}.
\end{align}
To estimate the second term in \eqref{eq 3.5}, we use H\"{o}lder's inequality, Young's inequality and
Theorem \ref{th 2.1} or Theorem \ref{th 2.2}, to obtain
\begin{align}\label{eq 3.8}
\int\limits_{B(x,2r)} b_{i} \frac{\partial u}{\partial x_{i}} \frac{\psi^{\alpha+1}}{(u_{k}+\delta)}
& \leq \left( \int\limits_{B(x,2r)} \frac{|\nabla u|^{2}}{(u_{k}+\delta)^{2}} \psi^{\alpha+1}  \right)^{\frac{1}{2}}
\left( \int\limits_{B(x,2r)} b_{i}^{2} \psi^{\alpha+1} \right)^{\frac{1}{2}} \nonumber \\
& \leq \frac{\epsilon}{n} \int\limits_{B(x,2r)} \frac{|\nabla u|^{2}}{(u_{k}+\delta)^{2}} \psi^{\alpha+1} + \frac{1}{4n\epsilon}
\int\limits_{B(x,2r)} b_{i}^{2} \psi^{\alpha} \nonumber \\
& \leq \frac{\epsilon}{n} \int\limits_{B(x,2r)} \frac{|\nabla u|^{2}}{(u_{k}+\delta)^{2}} \psi^{\alpha+1} + \frac{1}{4n\epsilon}
C^{i}_{1} \int\limits_{B(x,2r)} |\nabla \psi|^{\alpha}.
\end{align}
for every $i=1,\dots,n$, where the constants $ C^{i}_{1}$'s depend on $ n,\alpha, \| b_{i}^{2} \|_{L^{p,\varphi}}$
or $ \eta_{\alpha}b_{i}^{2}(r_{0})$. From \eqref{eq 3.8} we have
\begin{align}\label{eq 3.9}
\sum_{i= 1}^{n} \int\limits_{B(x,2r)} b_{i} \frac{\partial u}{\partial x_{i}} \frac{\psi^{\alpha+1}}{(u_{k}+\delta)}
\leq \epsilon \int\limits_{B(x,2r)} \frac{|\nabla (u+\delta)|^{2}}{(u_{k}+\delta)^{2}} \psi^{\alpha+1} + \frac{1}{4\epsilon}
C_{2} \int\limits_{B(x,2r)} |\nabla \psi|^{\alpha},
\end{align}
where $C_{2}$ depends on $ \max\limits_{i} \{ C^{i}_{1} \}$. The estimate for the last term in \eqref{eq 3.5} is
\begin{align}\label{eq 3.10}
\int\limits_{B(x,2r)} Vu\frac{\psi^{\alpha+1}}{(u_{k}+\delta)}
\leq \int\limits_{B(x,2r)} V\frac{(u+\delta)}{(u_{k}+\delta)}\psi^{\alpha}.
\end{align}
Introducing \eqref{eq 3.7}, \eqref{eq 3.9}, and \eqref{eq 3.10} in \eqref{eq 3.5}, we get
\begin{align}\label{eq 3.11}
& \int\limits_{B(x,2r)} \left\langle a \nabla u, \nabla (u_{k}+\delta) \right\rangle \frac{\psi^{\alpha+1}}{(u_{k}+\delta)^{2}} \nonumber \\
& \leq \epsilon \lambda^{-1}(\alpha+1) \int\limits_{B(x,2r)} \frac{|\nabla (u+\delta)|^{2}}{(u_{k}+\delta)^{2}} \psi^{\alpha+1}
+ \frac{\lambda^{-1}(\alpha+1)}{4\epsilon} \int\limits_{B(x,2r)} |\nabla \psi|^{2} \nonumber \\
& \qquad + \epsilon \int\limits_{B(x,2r)} \frac{|\nabla (u+\delta)|^{2}}{(u_{k}+\delta)^{2}} \psi^{\alpha+1} + \frac{1}{4\epsilon}
C_{2} \int\limits_{B(x,2r)} |\nabla \psi|^{\alpha} + \int\limits_{B(x,2r)} V\frac{(u+\delta)}{(u_{k}+\delta)}\psi^{\alpha},
\end{align}
for every $ k \in \mathbb{N} $.

Since $ (u_{k} + \delta) \rightarrow (u+\delta) $ a.e. in $ \Omega $ and $ u + \delta > 0 $, then
\begin{equation}\label{eq 3.13}
\frac{1}{(u_{k} + \delta)} \rightarrow \frac{1}{(u+\delta)}, \, \text{a.e. in} \, \Omega.
\end{equation}
For $ j,i= 1, \dots, n $, we infer from \eqref{eq 3.13}
\begin{equation}\label{eq 3.14}
\frac{\partial (u+\delta)}{\partial x_{j}} \frac{\partial u}{\partial x_{i}} \frac{1}{(u_{k} + \delta)^{2}} \rightarrow \frac{\partial (u+\delta)}{\partial x_{j}} \frac{\partial u}{\partial x_{i}}\frac{1}{(u+\delta)^{2}}, \, \text{a.e. in} \, B(x,2r).
\end{equation}
For every $ k \in \mathbb{N} $, we have
\begin{equation}\label{eq 3.15}
\left| \frac{\partial (u+\delta)}{\partial x_{j}} \frac{\partial u}{\partial x_{i}} \frac{1}{(u_{k} + \delta)^{2}} \right|
\leq \left| \frac{\partial (u+\delta)}{\partial x_{j}} \right|  \left| \frac{\partial u}{\partial x_{i}} \right| \frac{1}{\delta^{2}},
\end{equation}
and
\begin{equation}\label{eq 3.16}
\int\limits_{B(x,2r)} \left| \frac{\partial (u+\delta)}{\partial x_{j}} \right|  \left| \frac{\partial u}{\partial x_{i}} \right| \frac{1}{\delta^{2}}
\leq \frac{1}{\delta^{2}} \left\| \frac{\partial u}{\partial x_{j}} \right\|_{L^{2}(\Omega)} \left\| \frac{\partial u}{\partial x_{i}} \right\|_{L^{2}(\Omega)} < \infty,
\end{equation}
since $ u \in H^{1}_{0}(\Omega) $. The properties \eqref{eq 3.14}, \eqref{eq 3.15}, and \eqref{eq 3.16} allow us to use the
Lebesgue Dominated Convergent Theorem to obtain
\begin{equation}\label{eq 3.17}
\lim\limits_{k \rightarrow \infty} \int\limits_{B(x,2r)} \left| \frac{\partial (u+\delta)}{\partial x_{j}} \frac{\partial u}{\partial x_{i}} \frac{1}{(u_{k} + \delta)^{2}} - \frac{\partial (u+\delta)}{\partial x_{j}} \frac{\partial u}{\partial x_{i}} \frac{1}{(u + \delta)^{2}}  \right| = 0.
\end{equation}
By H\"{o}lder's inequality,  we also have
\begin{align}\label{eq 3.18}
& \int\limits_{B(x,2r)} \left| \left( \frac{\partial (u_{k}+\delta)}{\partial x_{j}} - \frac{\partial (u+\delta)}{\partial x_{j}} \right)\frac{\partial u}{\partial x_{i}} \frac{1}{(u_{k}+\delta)^{2}}  \right| \nonumber \\
& \leq \frac{1}{\delta^{2}} \left\| \frac{\partial (u_{k}+\delta)}{\partial x_{j}} - \frac{\partial (u+\delta)}{\partial x_{j}}  \right\|_{L^{2}(B(x,2r))} \left\| \frac{\partial u}{\partial x_{i}} \right\|_{L^{2}(B(x,2r))} \nonumber \\
& \leq \frac{1}{\delta^{2}} \left\| \frac{\partial (u_{k})}{\partial x_{j}} - \frac{\partial u}{\partial x_{j}}  \right\|_{L^{2}(B(x,2r))} \left\| u \right\|_{H^{1}(\Omega)} \nonumber \\
&\leq \frac{1}{\delta^{2}} \left\| u_{k} - u  \right\|_{H^{1}(\Omega)} \left\| u \right\|_{H^{1}(\Omega)}
\end{align}
for all $ k \in \mathbb{N} $. Since $\lim_{k \rightarrow \infty} \| u_{k} - u \|_{H^{1}(\Omega)} = 0$, from \eqref{eq 3.18} we get
\begin{align}\label{eq 3.19}
\lim\limits_{k \rightarrow \infty} \int\limits_{B(x,2r)} \left| \left( \frac{\partial (u_{k}+\delta)}{\partial x_{j}} - \frac{\partial (u+\delta)}{\partial x_{j}} \right)\frac{\partial u}{\partial x_{i}} \frac{1}{(u_{k}+\delta)^{2}}  \right| = 0.
\end{align}
Note that
\begin{align}\label{eq 3.20}
& \int_{B(x,2r)} \left| a_{ij} \frac{\partial u}{\partial x_{i}} \frac{\partial (u_{k}+\delta)}{\partial x_{j}} \frac{\psi^{\alpha+1}}{(u_{k}+\delta)^{2}} - a_{ij} \frac{\partial u}{\partial x_{i}} \frac{\partial (u+\delta)}{\partial x_{j}} \frac{\psi^{\alpha+1}}{(u+\delta)^{2}} \right| \nonumber \\
& \leq \frac{\| a_{ij} \|_{L^{\infty}(\Omega)}}{\delta^{2}} \int_{B(x,2r)} \left| \frac{\partial u}{\partial x_{i}} \frac{\partial (u_{k}+\delta)}{\partial x_{j}} \frac{1}{(u_{k}+\delta)^{2}} - \frac{\partial u}{\partial x_{i}} \frac{\partial (u+\delta)}{\partial x_{j}} \frac{1}{(u+\delta)^{2}} \right| \nonumber \\
& \leq \frac{\| a_{ij} \|_{L^{\infty}(\Omega)}}{\delta^{2}} \int\limits_{B(x,2r)} \left| \frac{\partial (u+\delta)}{\partial x_{j}} \frac{\partial u}{\partial x_{i}} \frac{1}{(u_{k} + \delta)^{2}} - \frac{\partial (u+\delta)}{\partial x_{j}} \frac{\partial u}{\partial x_{i}} \frac{1}{(u + \delta)^{2}}  \right| \nonumber \\
& \qquad + \frac{\| a_{ij} \|_{L^{\infty}(\Omega)}}{\delta^{2}} \int\limits_{B(x,2r)} \left| \left( \frac{\partial (u_{k}+\delta)}{\partial x_{j}} - \frac{\partial (u+\delta)}{\partial x_{j}} \right)\frac{\partial u}{\partial x_{i}} \frac{1}{(u_{k}+\delta)^{2}}  \right|,
\end{align}
for all $ k \in \mathbb{N} $. Combining \eqref{eq 3.17}, \eqref{eq 3.19}, and \eqref{eq 3.20}, we have
\begin{align}\label{eq 3.12}
&\lim\limits_{k \rightarrow \infty} \int_{B(x,2r)} \left\langle a \nabla u, \nabla (u_{k}+\delta) \right\rangle \frac{\psi^{\alpha+1}}{(u_{k}+\delta)^{2}} \nonumber \\
& = \sum_{i, j = 1}^{n} \lim\limits_{k \rightarrow \infty} \int_{B(x,2r)} a_{ij}\frac{\partial u}{\partial x_{i}} \frac{\partial (u_{k}+\delta)}{\partial x_{j}} \frac{\psi^{\alpha+1}}{(u_{k}+\delta)^{2}} \nonumber \\
& = \sum_{i, j = 1}^{n} \int_{B(x,2r)} a_{ij} \frac{\partial u}{\partial x_{i}} \frac{\partial (u+\delta)}{\partial x_{j}} \frac{\psi^{\alpha+1}}{(u+\delta)^{2}} \nonumber \\
& = \int_{B(x,2r)} \left\langle a \nabla u, \nabla (u+\delta) \right\rangle \frac{\psi^{\alpha+1}}{(u+\delta)^{2}}.
\end{align}
From \eqref{eq 3.13},
\begin{equation}\label{eq 3.21}
\frac{|\nabla (u+\delta)|^{2}}{(u_{k}+\delta)^{2}} \psi^{\alpha+1} \rightarrow \frac{|\nabla (u+\delta)|^{2}}{(u+\delta)^{2}} \psi^{\alpha+1}, \, \text{a.e. in} \, B(x,2r).
\end{equation}
For every $ k \in \mathbb{N} $, we have
\begin{equation}\label{eq 3.22}
\frac{|\nabla (u+\delta)|^{2}}{(u_{k}+\delta)^{2}} \psi^{\alpha+1}
\leq \frac{1}{\delta^{2}}|\nabla (u+\delta)|^{2},
\end{equation}
and
\begin{equation}\label{eq 3.23}
\int\limits_{B(x,2r)} \frac{1}{\delta^{2}}  |\nabla (u+\delta)|^{2}
\leq \frac{1}{\delta^{2}} \| u \|_{H^{1}(\Omega)} < \infty,
\end{equation}
since $ u \in H^{1}_{0}(\Omega) $.
Therefore, by \eqref{eq 3.21}, \eqref{eq 3.22}, \eqref{eq 3.23}, and Lebesgue Dominated Convergence Theorem,
\begin{equation}\label{eq 3.24}
\lim\limits_{k \rightarrow \infty} \int\limits_{B(x,2r)} \frac{|\nabla (u+\delta)|^{2}}{(u_{k}+\delta)^{2}} \psi^{\alpha+1}
= \int\limits_{B(x,2r)} \frac{|\nabla (u+\delta)|^{2}}{(u+\delta)^{2}} \psi^{\alpha+1}.
\end{equation}
We also have
\begin{align}\label{eq 3.25}
V\frac{(u+\delta)}{(u_{k}+\delta)}\psi^{\alpha} \rightarrow V\frac{(u+\delta)}{(u+\delta)}\psi^{\alpha} = V\psi^{\alpha}, \text{a.e. in} \, B(x,2r)
\end{align}
because of \eqref{eq 3.13}. For every $ k \in \mathbb{N} $, we have
\begin{equation}\label{eq 3.26}
\left| V\frac{(u+\delta)}{(u_{k}+\delta)}\psi^{\alpha} \right|
\leq \frac{1}{\delta} |V| |u+\delta|.
\end{equation}
If the assumption \eqref{a.4} holds, then
\begin{align}\label{eq 3.27}
\int\limits_{B(x,2r)} \frac{1}{\delta} |V| |u+\delta|
& \leq \frac{1}{\delta} \int\limits_{B(x,2r)} |V| |u+\delta| \nonumber \\
& < \frac{1}{\delta} \left( \int\limits_{B(x,2r)} |V|^{2} \right)^{\frac{1}{2}} \left( \int\limits_{B(x,2r)} |u+\delta|^{2} \right)^{\frac{1}{2}}
< \infty,
\end{align}
since $ V \in L^{2}_{\rm loc}(\mathbb{R})$ and $ u \in H^{1}_{0}(\Omega) $. On the other hand, if the assumption \eqref{a.5} holds, then
$ V \in \tilde{S}_{\alpha} \subset \tilde{S}_{1}  $ by virtue to \cite[p.554]{THG}. Therefore, using Theorem \ref{th 2.2} we have
\begin{align}\label{eq 3.28}
\int\limits_{B(x,2r)} \frac{1}{\delta} |V| |u+\delta|
& \leq \frac{1}{\delta} \int\limits_{B(x,2r)} |V| + \frac{1}{4\delta} \int\limits_{B(x,2r)} |V| |u+\delta|^{2} \nonumber \\
& \leq \frac{1}{\delta} \int\limits_{B(x,2r)} |V| + \frac{1}{4\delta} C(n) \eta_{2}V(r) \int\limits_{B(x,2r)} |\nabla u|^{2} < \infty,
\end{align}
since $ u \in H^{1}_{0}(\Omega) $. Combining \eqref{eq 3.25}, \eqref{eq 3.26}, \eqref{eq 3.27} or \eqref{eq 3.28}, we can apply the
Lebesgue Dominated Convergence Theorem to have
\begin{align}\label{eq 3.30}
\lim\limits_{k \rightarrow \infty} \int\limits_{B(x,2r)} V\frac{(u+\delta)}{(u_{k}+\delta)}\psi^{\alpha}
= \int\limits_{B(x,2r)} V\psi^{\alpha}.
\end{align}
Theorem \ref{th 2.1} or Theorem \ref{th 2.2} allow us to get the estimate
\begin{align}\label{eq 3.31}
\int\limits_{B(x,2r)} V\psi^{\alpha} \leq C_{3} \int\limits_{B(x,2r)} |\nabla \psi|^{\alpha},
\end{align}
where the constant $C_{3}$ depends on $ n,\alpha$, and $ \| V \|_{L^{p,\varphi}}$ or $ \eta_{\alpha}V(r_{0})$.
Letting $ k \rightarrow \infty $ in \eqref{eq 3.11} and applying all informations in \eqref{eq 3.12}, \eqref{eq 3.24}, \eqref{eq 3.30},
and \eqref{eq 3.31}, we obtain
\begin{align}\label{eq 3.32}
& \int\limits_{B(x,2r)} \left\langle a \nabla u, \nabla (u+\delta) \right\rangle \frac{\psi^{\alpha+1}}{(u+\delta)^{2}} \nonumber \\
& \leq \epsilon \lambda^{-1}(\alpha+1) \int\limits_{B(x,2r)} \frac{|\nabla (u+\delta)|^{2}}{(u+\delta)^{2}} \psi^{\alpha+1}
+ \frac{\lambda^{-1}(\alpha+1)}{4\epsilon} \int\limits_{B(x,2r)} |\nabla \psi|^{2} \nonumber \\
& \qquad + \epsilon \int\limits_{B(x,2r)} \frac{|\nabla (u+\delta)|^{2}}{(u+\delta)^{2}} \psi^{\alpha+1} + \frac{1}{4\epsilon}
C_{2} \int\limits_{B(x,2r)} |\nabla \psi|^{\alpha} + C_{3} \int\limits_{B(x,2r)} |\nabla \psi|^{\alpha}.
\end{align}
Notice that
\begin{equation*}
\lambda \int\limits_{B(x,2r)} \frac{|\nabla (u+\delta)|^{2}}{(u+\delta)^{2}} \psi^{\alpha+1}
\leq \int\limits_{B(x,2r)} \left\langle a \nabla u, \nabla (u+\delta) \right\rangle \frac{\psi^{\alpha+1}}{(u+\delta)^{2}}
\end{equation*}
by the ellipticity condition \eqref{b2}. Moreover, by choosing $\epsilon := \frac{1}{2} \frac{\lambda^{2}}{ (\alpha+1)+1 }$,
the inequality \eqref{eq 3.32} is simplified by
\begin{align}\label{eq 3.33}
\int_{B(x,2r)} \frac{|\nabla (u+\delta)|^{2}}{(u+\delta)^{2}} \psi^{\alpha+1} \leq C_{4} \int_{B(x,2r)} |\nabla \psi|^{2} +
C_{5} \int_{B(x,2r)} |\nabla \psi|^{\alpha},
\end{align}
where the constant $C_{4}$ depends on $ \alpha $ and $ \lambda$, while the constant $ C_{5} $ depends on $ C_{2} $ and $ C_{3}$.
Therefore, \eqref{eq 3.33} implies
\begin{align*}
\int_{B(x,r)} |\nabla \log (u+\delta)|^{2}
& \leq \int_{B(x,2r)} \frac{|\nabla (u+\delta)|^{2}}{(u+\delta)^{2}} \psi^{\alpha+1} \\
& \leq C_{5} \int_{B(x,2r)} |\nabla \psi|^{2} + C_{6} \int_{B(x,2r)} |\nabla \psi|^{\alpha} \\
& \leq C \left( r^{-2}r^{n} + r^{-\alpha}r^{n} \right) = C r^{-2}r^{n}.
\end{align*}
The last constant $ C $ depends on $ C_{4}$ and $ C_{5}$. From H\"{o}lder's inequality,
\begin{equation*}
\left( \frac{1}{r^{n}} \int_{B(x,r)} |\nabla \log (u+\delta)|^{\alpha} \right)^{\frac{2}{\alpha}}
\leq \frac{1}{r^{n}} \int_{B(x,r)} |\nabla \log (u+\delta)|^{2}
\leq C r^{-2},
\end{equation*}
whence
\begin{equation}\label{eq 3.34}
\frac{1}{r^{n}} \int_{B(x,r)} |\nabla \log (u+\delta)|^{\alpha}
\leq C r^{-\alpha}.
\end{equation}
By using Poincar\'{e}'s inequality together with the inequality \eqref{eq 3.34}, the theorem is proved.
\end{proof}

By virtue of Theorem \ref{th 3.1}, we have the following corollary.

\begin{corollary}\label{cor 1}
	Let $u\geq0$ be a weak solution of $Lu=0$ and $B(x,2r) \subseteq \Omega$ where $ r \leq 1$.
	Then, for every $ \delta>0$, $ \log(u+\delta) \in BMO_{\alpha}(B(x,r))$.
\end{corollary}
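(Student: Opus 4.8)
The plan is to observe that Corollary \ref{cor 1} is a uniform-in-subball repackaging of Theorem \ref{th 3.1}. Unwinding the definition of $BMO_{\alpha}(B(x,r))$ recalled in Section \ref{intro}, what we must produce is a single constant $C$, independent of the subball, for which
\begin{equation*}
\left( \frac{1}{|B'|} \int_{B'} |\log(u+\delta) - \log(u+\delta)_{B'}|^{\alpha} \, dy \right)^{\frac{1}{\alpha}} \leq C
\end{equation*}
holds for every ball $B' \subseteq B(x,r)$. Theorem \ref{th 3.1} supplies precisely this inequality, but only for the distinguished ball $B(x,r)$; so the work reduces to checking that Theorem \ref{th 3.1} applies verbatim to each subball and that its constant does not degenerate as $B'$ varies.

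First I would fix an arbitrary ball $B' := B(x',r') \subseteq B(x,r)$ and verify the hypotheses of Theorem \ref{th 3.1} with $B'$ in place of $B(x,r)$. The inclusion $B' \subseteq B(x,r)$ forces $r' \leq r$, hence $r' \leq 1$. Moreover, for any $y \in B(x',2r')$ the triangle inequality gives $|y-x| \leq |y-x'| + |x'-x| < 2r' + (r-r') = r + r' \leq 2r$, so that $B(x',2r') \subseteq B(x,2r) \subseteq \Omega$. Thus $u \geq 0$ is a nonnegative weak solution of $Lu = 0$ on a ball meeting the geometric requirements of Theorem \ref{th 3.1}, and the theorem yields
\begin{equation*}
\frac{1}{|B'|} \int_{B'} |\log(u+\delta) - \log(u+\delta)_{B'}|^{\alpha} \, dy \leq C
\end{equation*}
for this $B'$.

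The one point requiring genuine care --- and the expected main obstacle --- is to see that the constant $C$ delivered by Theorem \ref{th 3.1} may be chosen uniformly over all subballs. I would trace its dependence through the proof of Theorem \ref{th 3.1}: $C$ is assembled from $n$, $\alpha$, $\lambda$, and the quantities governing the potentials, namely either the Morrey norms $\|V\|_{L^{p,\varphi}}$ and $\|b_i^2\|_{L^{p,\varphi}}$ under \eqref{a.4}, or the Stummel moduli of $V$ and $b_i^2$ under \eqref{a.5}. In the Morrey case these norms are global and ball-independent, so uniformity is automatic. In the Stummel case the moduli appear evaluated at the radius of the ambient ball, i.e. at $2r'$; since each Stummel modulus is nondecreasing and $2r' \leq 2r$, one has $\eta_{\alpha}V(2r') \leq \eta_{\alpha}V(2r)$ and similarly for the $b_i^2$. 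Replacing every such factor by its value at $2r$ dominates $C$ by a single constant valid for all $B' \subseteq B(x,r)$.

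With a uniform $C$ secured, raising the preceding display to the power $1/\alpha$ reproduces exactly the defining inequality of $BMO_{\alpha}(B(x,r))$ for every subball, and the corollary follows. Thus the entire argument is a transcription of Theorem \ref{th 3.1} together with the elementary radius and containment inequalities, the only substantive input being the monotonicity of the Stummel modulus that keeps the constant from blowing up.
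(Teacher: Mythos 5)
Your proposal is correct and follows exactly the route the paper intends: the paper offers no separate proof of Corollary \ref{cor 1}, stating only that it holds ``by virtue of Theorem \ref{th 3.1},'' i.e.\ by applying that theorem to every subball $B'\subseteq B(x,r)$. Your writeup merely makes explicit the details the paper leaves implicit --- the containment $B(x',2r')\subseteq B(x,2r)\subseteq\Omega$, the bound $r'\le r\le 1$, and the uniformity of the constant via the global Morrey norms or the monotonicity of the Stummel modulus --- all of which are verified correctly.
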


Gathering Lemma \ref{lem 1.2}, Lemma \ref{lem 1.3}, Lemma \ref{log of weak solution}, and Corollary \ref{cor 1}, we obtain the
unique continuation property of the equation $ Lu=0 $ stated in Corollary \ref{cor 2}.

\begin{proof}[\textbf{Proof of the Corollary \ref{cor 2}}]
	Given $ x \in \Omega $ and let $B := B(x,r)$ be a ball where $B(x,2r) \subseteq \Omega$ and $ r \leq 1$.
Let $ \{ \delta_{j} \} $ be a sequence of real numbers in $ (0,1) $ which converges to $ 0 $. From Corollary \ref{cor 1}, we get
$ \log(u+\delta_{j}) \in BMO_{\alpha}(B) $. Therefore $ \log(u+\delta_{j}) \in BMO(B)$.
According to Lemma \ref{log of weak solution}, there exists a constant $ M>0 $ such that we have two cases:
\begin{equation*}
\int_{B(x,2r)} u^{\beta} dy \leq \int_{B(x,2r)} (u+\delta_{j})^{\beta} dy \leq M^{\frac{1}{2}} \int_{B(x,r)}
(u+\delta_{j})^{\beta} dy,
\end{equation*}
where $ 0<\beta <1$, or,
\begin{equation*}
\int_{B(x,2r)} u \, dy \leq \int_{B(x,2r)} (u+\delta_{j}) \, dy \leq M^{\frac{1}{2}} \int_{B(x,r)} (u+\delta_{j}) \, dy.
\end{equation*}
In both cases, letting $ j \rightarrow \infty $, we obtain
\begin{equation*}
\int_{B(x,2r)} u^{\beta} dy \leq M^{\frac{1}{2}} \int_{B(x,r)} u^{\beta} dy,
\end{equation*}
or,
\begin{equation*}
\int_{B(x,2r)} u \, dy \leq M^{\frac{1}{2}} \int_{B(x,r)} u \, dy.
\end{equation*}
Therefore, using Lemma \ref{lem 1.3} for the first case and Lemma \ref{lem 1.2} for the second case, $ u \equiv 0 $ in $ B(x,2r) $ if $u$ vanishes of infinity order at $x$.
\end{proof}

The example below shows that there exist an elliptic partial
differential which does not satisfies strong unique continuation property where its potential belongs to Morrey spaces $ L^{p,n-4p} $ and
$ \tilde{S}_{\beta} $ for all $ \beta \geq 4 $.

\begin{example}\label{ex1}
	Let $\Omega = B(0,1) \subseteq \mathbb{R}^{n} $, $ w: \Omega \rightarrow \mathbb{R} $ and $ V : \mathbb{R}^{n} \rightarrow \mathbb{R} $ which are defined by the formula
	\begin{equation*}
	w(x) =
	\begin{cases}
	\exp(-|x|^{-1})|x|^{-(n+1)}&, x \in \Omega \backslash \{ 0 \} \\
	1&, x = 0,
	\end{cases}
	\end{equation*}
	and
	\begin{equation*}
	V(x) =
	\begin{cases}
	3(n+1)|x|^{-2} - (n+5) |x|^{-3} + |x|^{-4}&, x \in \mathbb{R}^{n} \backslash \{ 0 \} \\
	0&, x = 0.
	\end{cases}
	\end{equation*}
	Note that, $ w(x)>0 $ for all $ x \in \Omega $. We will show that $ w $ vanishes of infinity order at $ x=0 $ 
and is a solution of Schr\"{o}dinger equation
	\begin{equation}\label{c.0}
	-\Delta w(x) + V(x)w(x) = 0, \qquad x \in \Omega.
	\end{equation}
	We calculate
	\begin{align*}
	\int_{|x|<r} w(x) dx & = \int_{|x|<r} \exp(-|x|^{-1})|x|^{-(n+1)} dx \\
	& = C(n) \exp\left( -\frac{1}{r}\right),
	\end{align*}
	by using polar coordinate. From the fact
	\begin{equation*}
	\lim_{r \rightarrow 0} \exp\left( -\frac{1}{r}\right) (r)^{-\gamma} = 0,
	\end{equation*}
	for all $ \gamma > 0 $, then
	\begin{equation*}
	\lim_{r \rightarrow 0} \frac{1}{|B(0,r)|^{\kappa}} \int_{|x|<r} w(x) dx
	= C(n) \lim_{r \rightarrow 0} \exp\left( -\frac{1}{r}\right) \frac{1}{r^{n\kappa}} = 0,
	\end{equation*}
	for all $ \kappa>0 $. Therefore $ w $ vanishes of infinity order at $ x=0 $. For $ i = 1, \dots, n $,
	\begin{align*}
	\frac{\partial w}{\partial x_{i}}(x) = w(x) \left( -(n+1)|x|^{-2} x_{i} + |x|^{-3} x_{i}  \right).
	\end{align*}
	Hence
	\begin{align*}
	\frac{\partial^{2} w}{\partial x_{i}^{2}}(x) = w(x) g(x),
	\end{align*}
	where
	\begin{align*}
	g(x )& = -(n+1)|x|^{-2} + |x|^{-3} + (2(n+1)+(n+1)^{2})|x|^{-4} x_{i}^{2} \\
	& \qquad - (3+2(n+1)) |x|^{-5} x_{i}^{2} + |x|^{-6} x_{i}^{2}.
	\end{align*}
	Consequently
	\begin{align*}
	\Delta w(x) & = \sum_{i= 1}^{n}\frac{\partial^{2} w}{\partial x_{i}^{2}}(x) = \sum_{i= 1}^{n} w(x)g(x) \\
	& = w(x) \sum_{i= 1}^{n} g(x) = w(x) \left(  3(n+1)|x|^{-2} - (n+5) |x|^{-3} + |x|^{-4} \right) \\
	& = w(x) V(x).
	\end{align*}
	This shows that $ w $ is the solution of the Schr\"{o}dinger equation. We conclude that this Schr\"{o}dinger equation
	does not satisfy the strong unique continuation property since the solution $ w $ vanishes of infinity order at $ x=0 $, but
	strictly positive function in $ \Omega $. Now, we will analyze the property of function $ V $.
	
	Let $ y \in \mathbb{R}^{n} $ and $ y \neq 0 $, we get
	\begin{align}\label{c.1}
	|V(y)| & \leq 3(n+1)|y|^{-2} + (n+5) |y|^{-3} + |y|^{-4}.
	\end{align}
	For every $ \beta > 4 $ and $ x \in \mathbb{R}^{n} $, then
	\begin{align}\label{c.2}
	\int_{|x-y|<r} \frac{|V(y)|}{|x-y|^{n-\beta}} dy
	& \leq C(n) \int_{|x-y|<r} \frac{|y|^{-2}}{|x-y|^{n-\beta}} dy + C(n) \int_{|x-y|<r} \frac{|y|^{-3}}{|x-y|^{n-\beta}} dy \nonumber \\
	& \qquad + C(n) \int_{|x-y|<r} \frac{|y|^{-4}}{|x-y|^{n-\beta}} dy,
	\end{align}
	by using \eqref{c.1}. For $ m \in \{2,3,4\} $, we estimate
	\begin{align}\label{c.3}
	\int_{|x-y|<r} \frac{|y|^{-m}}{|x-y|^{n-\beta}} dy
	& =  \int_{ \substack{ \{|x-y|<|y|\} \\ \cap \{|x-y|<r\} } } \frac{|y|^{-m}}{|x-y|^{n-\beta}} dy
	+ \int_{|y|\leq|x-y|<r} \frac{|y|^{-m}}{|x-y|^{n-\beta}} dy \nonumber \\
	& \leq \int_{|x-y|<r } \frac{|x-y|^{-m}}{|x-y|^{n-\beta}} dy + \int_{|y|<r} \frac{|y|^{-m}}{|y|^{n-\beta}} dy \nonumber \\
	& = C(n,m,\beta) r^{\beta-m}.
	\end{align}
	Introducing \eqref{c.3} in \eqref{c.2} we obtain
	\begin{align}\label{c.4}
	\int_{|x-y|<r} \frac{|V(y)|}{|x-y|^{n-\beta}} dy
	\leq C(n,\beta) (r^{\beta-2} + r^{\beta-3} + r^{\beta-4})
	\end{align}
	Since $ x $ arbitrary in \eqref{c.4}, then
	\begin{equation*}
	\eta_{\beta}V(r) \leq C(n,\beta) (r^{\beta-2} + r^{\beta-3} + r^{\beta-4})
	\end{equation*}
	which tell us $ \eta_{\beta}V(r) \rightarrow 0 $ for $ r \rightarrow 0 $.
	Whence $ V \in S_{\beta} \subseteq \tilde{S}_{\beta} $ for all $ \beta > 4 $. Moreover, $ \eta_{\beta}V(r) < \infty $
	for $ \beta = 4 $ and hence $ V \in \tilde{S}_{4} $.
	
	Let $ x \neq 0 $ and $ |x| \leq (n+5)^{-1} $, we have
	$ - (n+5) |x|^{-3} + |x|^{-4} \geq 0 $. This implies
	\begin{align}\label{c.5}
	V(x) \geq 3(n+1)|x|^{-2}.
	\end{align}
	Given $ 1 \leq \alpha \leq 2 $ and $ 0<r<(n+5)^{-1} $, then by \eqref{c.5}
	\begin{align}
	\eta_{\alpha}V(r)
	& \geq C(n) \int_{|y|<r} \frac{|y|^{-2}}{|y|^{n-\alpha}} dy \nonumber \\
	& \geq C(n) r^{\alpha-2} \int_{|y|<r} \frac{1}{|y|^{n}} dy = \infty.
	\end{align}
	Thus $ V \notin \tilde{S}_{\alpha} $.
	
	Define a function $ V^{*} = V \chi_{\Omega} $. Then $ V^{*}: \mathbb{R}^{n} \rightarrow \mathbb{R} $ and
	$ w $ is a solution the equation \eqref{c.0} where $ V $ is replaced by $ V^{*} $.
	For $ y \in \mathbb{R}^{n} $ and $ y \neq 0 $, we get
	\begin{align}\label{c.6}
	|V^{*}(y)| & \leq (4n+9)|y|^{-4}.
	\end{align}
	Given $ x \in \mathbb{R}^{n} $ and $ r>0 $. By \eqref{c.6} and using similar technique as in \eqref{c.3}, we have
	\begin{align}\label{c.7}
	\frac{1}{r^{n-4p}} \int_{|x-y|<r } |V^{*}(y)|^{p} dy
	\leq \frac{1}{r^{n-4p}} \int_{|x-y|<r } |y|^{-4p} dy
	= C(n,p).
	\end{align}
	According to \eqref{c.7}, we conclude $ V^{*} \in L^{p,n-4p} $.	 \qed
\end{example}

\begin{remark}
The equation $ Lu=0 $ has the strong unique continuation property if $ V, b^{2}_{i} \in \tilde{S}_{\alpha} $
for $ i=1, \dots, n $ and $ 1 \leq \alpha \leq 2 $ (see assumption \eqref{a.5}).
In view of Example \ref{ex1}, there exist $ V \in \tilde{S}_{\alpha}  $, $ \alpha \geq 4 $, and $ b_{i} = 0 $ for $ i=1, \dots, n $ such that the equation
$ Lu=0 $ does not have the strong unique continuation property.
However, the authors still do not know whether $ Lu=0 $ has the strong unique continuation property or not
if $ V, b^{2}_{i} \in \tilde{S}_{\alpha} $ for $ i=1, \dots, n $ and $ 2 < \alpha < 4 $.
\end{remark}

\begin{remark}
The equation $ Lu=0 $ has the strong unique continuation property if $ V, b^{2}_{i} \in L^{p,\varphi} $
where \eqref{a.4} holds. If we choose $ V \in L^{p,n-4p}  $ (i.e. $\alpha=4$) as in Example \ref{ex1} and $ b_{i} = 0 $ for $ i=1, \dots, n $,
then the equation
$ Lu=0 $ does not have the strong unique continuation property.
\end{remark}

\bigskip

\noindent{\bf Acknowledgement}. This research is supported by ITB Research \& Innovation
Program 2019. The first author also thanks LPDP Indonesia.

\bibliographystyle{amsplain}

\end{document}